\numberwithin{equation}{section}
\theoremstyle{plain}
\newtheorem{thm}{\bf Theorem}[section]
\newtheorem{lem}[thm]{\bf Lemma}
\newtheorem{prop}[thm]{\bf Proposition}
\newtheorem{defi}[thm]{\bf Definition}
\newtheorem{ex}[thm]{\bf Example}
\newcommand*{\re}{\mathbf{R}}
\newcommand*{\ene}{\mathbf{e}}
\newcommand*{\zz}{\mathbf{Z}}
\newcommand*{\nat}{\mathbf{N}}
\newcommand*{\ff}{\mathcal{F}}
\newcommand{\Rmnum}[1]{\uppercase\expandafter{\romannumeral #1}}
\date{}
\begin{document}

    \title{\Large \bf \boldmath\ \\ ESTIMATION OF FIRST RETURNING SPEED IN NULL RECURRENT CONTINUOUS-TIME MARKOV CHAINS}

     \author{\large Huixi WANG$^1$ \qquad Minzhi ZHAO$^2$ }
 
    \maketitle

    \renewcommand{\thefootnote}{\fnsymbol{footnote}}
    \footnotetext{\hspace*{-5mm} \begin{tabular}{@{}r@{}p{13.4cm}@{}}
    & Manuscript received  \\ 
    $^1$ & Department of Mathematics, Fudan University,
    Shanghai 200433, China.\\
    &{E-mail:hxwang20@fudan.edu.cn} \\
    $^{2}$ & Department of Mathematics, Zhejiang University,
    Hangzhou, Zhejiang 310058, China.\\
    &{E-mail:zhaomz@zju.edu.cn} \\
	\end{tabular}}

\begin{abstract}
    This paper establishes a novel connection between null-recurrent CTMCs and electric networks, offering a systematic classification of null-recurrent behavior based on the first returning speed. By leveraging techniques from electric network theory, we present a general method for estimating the first returning speed of null recurrent birth-death processes and provide some important examples.
\end{abstract}

\vskip 4.5mm
\begin{tabular}{@{}l@{ }p{10.1cm}} {\bf Keywords } &
Null recurrence, First returning time, Continuous-time Markov chains, Electric network, Birth-death processes
\end{tabular}

{\bf 2020 MR Subject Classification } 60J27

\baselineskip 14pt

\setlength{\parindent}{1.5em}

\section{Introduction}
Markov chains are a fundamental class of stochastic processes that plays a crucial role in various fields. In particular, Continuous-Time Markov Chains (CTMCs) on a countable state space are widely used in applications, for example, in genetics \cite{ewens2004mathematical}, cell biology \cite{bressloff2014stochastic}, epidemiology \cite{pastor2015epidemic}, queueing theory \cite{kalashnikov2013mathematical} and many other fields.

In some literature, CTMCs are also referred to as jump processes\cite{fukushima2011dirichlet} or Q-processes\cite{chen2004markov}. In the study of CTMCs, the two most important problems are the construction and the criteria for its uniqueness, and the dynamical properties (explosivity\cite{reuter1957denumerable}, recurrence, transience, ergodicity, etc.) of CTMCs. Over the past 50 years, a substantial amount of research has been conducted on these topics\citep{Wang1992Birth, anderson2012continuous,norris1998markov}. Using the moments of the last exit time\cite{hu2019moments}, first hitting time (or first returning time)\cite{gong2012hitting} to a further characterization is a problem of widespread interest. These concepts play crucial roles in probability theory and stochastic analysis, enabling deeper understanding of the dynamic behaviors and statistical properties of random processes. These results are also widely applied in various models\citep{xu2023full,backenkohler2020bounding,narayanan1996first}.

\subsection{Main results}
The main results of this paper are as follows. For null recurrent CTMCs, we first provide 
the estimation of the tail probability distribution for the first returning time $\tau_x$. Then we derive some equivalent characterizations of estimating \( E^x(w(\tau_x))\), where $w(\cdot)$ is a weight function on $[0,+\infty)$ that grows more slowly than linearly and needs to satisfy some technical conditions. Subsequently, inspired by \cite{Zhao2006null}, we establish an electric network for CTMCs and leverage the techniques of electric networks to present an equivalent characterization of \( E^x(w(\tau_x))\). By applying the electric network approach, we can precisely compute the necessary and sufficient conditions for \( E^x(w(\tau_x))<\infty\) with respect to any given weight function \( w(\cdot) \) in birth-death processes. Using these theorems, we can classify numerous important null recurrent CTMCs, particularly birth-death processes, based on the established framework.

\subsection{Comparison with results in the literature and further extensions}

Regarding CTMCs (esp. birth-death processes) with a countable state space $I$, classical research on the first passage time or first returning time typically starts from the perspective of solving equations 
\[\lambda f-Qf=0.\]
In the above equation, $f(\cdot)$ is a bounded function defined on $I$, $\lambda>0$ and $Q$ is the density matrix. We can construct the resolvent by using the solutions of equations, so as to represent the Laplace transform of first passage times $E^x(e^{-\lambda T_y})$. 
Regarding the application of this important method, numerous references can be consulted, such as \cite{Wang1992Birth,chen2004markov}. This approach has given rise to many analytical techniques, such as the martingale method\cite{menshikov2014explosion,kou2003first}, approximations by other CTMCs\cite{al2015moment,horvath2024approximation}. These methods mainly focus on constructing the behavior of the process at boundaries or proving the existence of moments of first passage times, but they are often difficult to compute for specific processes. Additionally, combinatorial techniques were used in some studies to derive exact solutions for certain special processes\cite{szewczak2008moments}, though this approach lacks generality. Meanwhile, there are some results on the moments of first returning times for positive recurrent discrete Markov chains, which can be referred to\cite{aurzada2011moments,chung1967markov}. Some results on null-recurrent discrete-time Markov chains (DTMCs) can be referred to \cite{Zhao2006null}, which gives a characterization of the electric networks corresponding to null-recurrent DTMCs and some basic properties.

This paper mainly focuses on the characterization of the first returning time for null-recurrent CTMCs. From the perspective of electric networks, it provides a general computational framework for characterizing \( E^x(w(\tau_x))\) of birth-death processes. For general birth-death processes, such strict estimation is difficult to compute using other methods.

\subsection{Outline}

The paper is organized as follows. In Section 2, the basic notations and some preliminaries we used about CTMC are introduced. The estimation of the tail probability distribution for the first returning time $\tau_x$ and some equivalent characterizations of \( E^x(w(\tau_x))<\infty\) are derived in Section 3. In Section 4, electric networks for CTMC and some techniques of electric networks are provided. An important characterization of \( E^x(w(\tau_x))<\infty\) in electric networks is proved as well. In Section 5, we provide a standard procedure for estimating \( E^x(w(\tau_x))\) and determine the results applied in the linear growth model and bilateral birth-death process. 

\section{Preliminaries on continuous-time Markov chains}
A continuous-time Markov chain is a Markov process $\{X_t,t\geq0\}$ on $(\Omega,\ff,P)$ that evolves in continuous time and has a countable state space $I$. Assume that $P(t,x,y)(x,y\in I)$ is the transition probability function or matrix on $[0,\infty)\times I\times I,$ which means \[P(X_t=y|X_0=x)=P(t,x,y).\] Sometimes, when there is no ambiguity, we will also write $P(t,x,y)$ as $P_{xy}(t)$. In this paper, we always assume that the probability matrix $P_{xy}(t)$ is {\bf honest} and {\bf standard}, that is to say \[\sum _{y\in I} P_{xy}(t)=1,\,\,\,\lim_{t\downarrow 0}P_{xy}(t)=\delta_{xy}, \]
for any $x,y\in I$ and $t\geq 0$.

Let $T_{y}=\inf\{t > 0: X_t = y\}$ be {\bf the first hitting time} of state $y$. Notice that, if the process $X_t$ starts from $y$, then $P^y(T_{y}=0)=1$. Let {\bf the first discontinuous point} be $\eta_1:=\inf \{t>0:X_t\neq X_0\}$. {\bf The first returning time} of state $x$ is denoted by $\tau_x=\inf\{t>\eta_1:X_t=x\}$. It's obviously that $P^x(\tau_y=T_y)=1$ for $x\ne y$. According to Theorem \Rmnum{2}.15.4 in \cite{chung1967markov}, both $T_{y}$ and $\tau_x$ have continuous probability density functions. 

The transition rates of a CTMC are often described using a density matrix denoted by $Q=(q_{xy})$. Throughout this paper, we consistently assume that the Q-matrix $Q=(q_{xy})$ is {\bf regular}. 
For a given transition probability matrix $P(t,x,y)$, a regular Q-matrix $Q=(q_{xy})$ means  
\[q_{xy}=\lim_{t\downarrow 0} \dfrac{P_{xy}(t)-\delta_{xy}}{t},\,\,x,y\in I\]
where $0\leq q_{xy}< \infty$ for $x\neq y$ and $0< q_x:=-q_{xx}< \infty,\, \sum_{y\in I}q_{xy} = 0.$ However, given a Q-matrix, the process is not necessarily unique. We have already defined $\eta_1$ the first discontinuous point of $\{X_t,t\geq0\}$. By the definition of density matrix, \[P^x(\eta_1>t)=e^{-q_{x}t},\] which means the first discontinuous point of $\{X_t,t\geq0\}$ exists. We denote $\eta_2$ the first discontinuous point after $\eta_1$. Then $\eta_2$ is the second discontinuous point of $\{X_t,t\geq0\}$ and $\eta_2$ exists by the strong Markov property, and so on. From this, we know that $\eta_n$ is well defined. Let \[\eta:=\lim_{n\rightarrow +\infty} \eta_n\] be {\bf the first flying time} of $\{X_t,t\geq0\}$. We assume throughout the paper that the first flying time is infinite, so that the process is uniquely determined by the Q-matrix.

The {\bf embedded Markov chain} of CTMC is an important concept. Given a CTMC, its embedded Markov chain $Y_n$ is a discrete-time Markov chain (DTMC), which can be obtained by considering the process at the jump times. Intuitively, $Y_n = X_{T_n}$, where $T_n$ are the successive jump times of the process $X_t$. The transition probabilities of the embedded Markov chain are related to Q-matrix $Q=(q_{xy})$ of CTMC. The one-step transition probability $p_{xy}$ of the embedded Markov chain from state $x$ to state $y$ is given by $p_{xy}=q_{xy}\slash q_{x}$ for $x\neq y$, and $p_{xx} = 0$. The embedded chain has attracted extensive attention due to its wide applications in various practical models\cite{liu2015embedded,athreya1968embedding} and approximation theory\cite{bottcher2014embedded}.

In the theoretical framework of CTMCs, the invariant measure is one of the core concepts for characterizing the long-term dynamic behavior of a system. For a CTMC with a countable state space $I$, if there exists a non-negative measure \(m(\{i\})_{i \in I} \) such that for any state \( j \in I \) and any \( t \geq 0 \), the equation  
\[m(\{j\}) = \sum_{i \in I} m(\{i\})P(t,i,j)\]  
holds, then  \(m(\{i\})_{i \in I} \) is called the {\bf invariant measure}. When the state space is finite, the existence of an invariant measure is guaranteed by the basic theory of linear algebra. When the state space is infinite, the existence of an invariant measure is closely related to the recurrence property of the chain. 

Here we present some classical characterizations of recurrence and null recurrence for irreducible CTMCs. For convenience, we denote $P(\cdot|X_0=x)$ and $E(\cdot|X_0=x)$ by $P^x(\cdot)$ and $E^x(\cdot)$.
\begin{thm} \label{thm:2.1}
For an irreducible CTMC $\{X_t,t\geq 0\}$ with state space $I$, the following statements are equivalent.
    \begin{enumerate}[(1)]
        \item $X_t$ is recurrent.
        \item For any $x,y\in I$, $\int_0^\infty P_{xy}(t) dt=\infty.$ 
        \item For any $x,y\in I$, $P^x(T_y<\infty)=1.$
        \item For any $x\in I$, $P^x(\tau_x<\infty)=1.$
        \item The embedded chain $Y_n$ is recurrent. 
    \end{enumerate}
\end{thm}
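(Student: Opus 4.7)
The plan is to reduce everything to the corresponding classical statements for the embedded discrete-time chain $Y_n$, which is legitimate because the non-explosion assumption $\eta = \infty$ identifies the sequence of states visited by $X_t$ with the trajectory of $Y_n$. Concretely, I would establish the cycle $(1) \Leftrightarrow (4) \Leftrightarrow (3) \Leftrightarrow (5)$ and then treat $(1) \Leftrightarrow (2)$ separately via an expected occupation-time computation.

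First, under non-explosion, $\{T_y < \infty\}$ agrees $P^x$-a.s.\ with the event $\{Y_n = y \text{ for some } n \geq 0\}$, and $\{\tau_x < \infty\}$ agrees with the event that $Y_n$ returns to $x$ after leaving it. Invoking standard DTMC theory for irreducible chains (see, e.g., \cite{chung1967markov}), the three properties ``$Y_n$ is recurrent,'' ``$Y_n$ returns to its starting state almost surely,'' and ``$Y_n$ hits every state from any initial state almost surely'' are mutually equivalent. Translating each side through the above identification yields $(3) \Leftrightarrow (4) \Leftrightarrow (5)$ at once.

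For $(1) \Leftrightarrow (4)$, the strong Markov property applied at the successive return times to $x$ makes the total number of returns geometric with success parameter $1 - P^x(\tau_x < \infty)$, so $P^x(\tau_x < \infty) = 1$ iff $x$ is visited infinitely often a.s., which is the standard definition of recurrence (and the ``for some $x$'' and ``for every $x$'' versions coincide by irreducibility). For $(1) \Leftrightarrow (2)$, Tonelli gives
\[\int_0^\infty P_{xy}(t)\,dt = E^x\!\left[\int_0^\infty \mathbf{1}_{\{X_t = y\}}\,dt\right],\]
which is the expected total occupation time of $y$ starting from $x$. Each visit of $X_t$ to $y$ contributes a sojourn of mean $1/q_y$, and the number of visits is geometric (a.s.\ finite if transient, a.s.\ infinite if recurrent), so the integral is infinite iff $y$ is recurrent; irreducibility then propagates this across all pairs $(x,y)$.

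The main obstacle will be the careful identification of hitting events for $X_t$ with hitting events for $Y_n$: although it looks automatic, one must rule out the pathology of $X_t$ reaching $y$ only through an accumulation of infinitely many jumps, which is precisely what the hypothesis $\eta = \infty$ prevents. Once that identification is made rigorous, the remainder reduces to well-established DTMC facts and the elementary occupation-time computation above.
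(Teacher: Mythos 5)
The paper itself gives no proof of Theorem \ref{thm:2.1}: it is quoted as a classical result, with the reader referred to \cite{anderson2012continuous,chung1967markov}. Your sketch — identifying hitting and return events of $X_t$ with those of the embedded chain $Y_n$ under the standing non-explosion assumption, the geometric-number-of-returns argument for $(1)\Leftrightarrow(4)$, and the occupation-time computation $\int_0^\infty P_{xy}(t)\,dt = E^x[N_y]/q_y$ for $(1)\Leftrightarrow(2)$ — is exactly the standard argument given in those references, and it is correct as outlined.
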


For recurrent CTMCs, according to the dynamic behavior of the process, it can be further divided into null recurrence and positive recurrence. There is already a rich literature on the characterization of these two properties. 
\begin{thm} \label{thm:2.2}
For an irreducible recurrent CTMC $\{X_t,t\geq 0\}$ with state space $I$, the following statements are equivalent.
    \begin{enumerate}[(1)]
        \item $X_t$ is null-recurrent.
        \item For any $x\in I$, $\lim_{t\rightarrow\infty}P_{xx}(t)=0$. 
        \item For any $x\in I$, $E^x(\tau_x)=\infty$.
        \item The invariant measure $m(\{x\})$ satisfies $m(I)=\infty$.
    \end{enumerate}
\end{thm}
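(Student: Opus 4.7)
The plan is to establish (3)$\Leftrightarrow$(4) by constructing an explicit invariant measure through a single excursion from a reference state, to derive (2)$\Leftrightarrow$(3) from a renewal equation for the diagonal transition function $P_{xx}(t)$, and to treat (1)$\Leftrightarrow$(3) as the conventional definition of null recurrence among recurrent chains (the alternative convention, defining null recurrence via (4), reduces to the same statement once (3)$\Leftrightarrow$(4) is known).

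For (3)$\Leftrightarrow$(4), I would fix a reference state $x \in I$ and define
\[ m(\{j\}) := E^x\left(\int_0^{\tau_x} \mathbf{1}_{\{X_s = j\}}\, ds\right), \quad j \in I. \]
The strong Markov property applied at the successive returns to $x$, together with the identity $m(\{j\}) = \nu(j)/q_j$ (where $\nu(j)$ is the expected number of visits to $j$ by the embedded chain $Y_n$ during one excursion and the factor $1/q_j$ comes from the mean of the exponential sojourn time at $j$), shows that $m$ satisfies $mQ = 0$ and hence is invariant under the semigroup $P(t,\cdot,\cdot)=e^{tQ}$. By construction the total mass is $m(I) = E^x(\tau_x)$. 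The irreducibility and recurrence of $Y_n$ (Theorem \ref{thm:2.1}(5)), combined with the classical uniqueness-up-to-scalar of invariant measures for irreducible recurrent DTMCs, lifts via the bijection $\nu(j) = m(\{j\}) q_j$ to the analogous uniqueness on the CTMC side, so the dichotomy $E^x(\tau_x) < \infty$ versus $E^x(\tau_x) = \infty$ is really a property of any invariant measure of $X_t$.

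For (2)$\Leftrightarrow$(3), I would use that the returns to $x$ form a renewal process. Decomposing $\{X_t = x\}$ according to whether $\tau_x > t$ (in which case the chain has never left $x$, an event of probability $e^{-q_x t}$) or $\tau_x \le t$, and applying the strong Markov property at $\tau_x$, yields the renewal equation
\[ P_{xx}(t) = e^{-q_x t} + \int_0^t P_{xx}(t-s)\, dF_{\tau_x}(s). \]
Since $\tau_x$ admits a continuous density (noted right after its definition), $F_{\tau_x}$ is non-lattice, and $s \mapsto e^{-q_x s}$ is directly Riemann integrable. The key renewal theorem then gives
\[ \lim_{t \to \infty} P_{xx}(t) = \frac{1}{E^x(\tau_x)} \int_0^\infty e^{-q_x s}\, ds = \frac{1}{q_x E^x(\tau_x)}, \]
with the convention $1/\infty = 0$. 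Hence $\lim_{t \to \infty} P_{xx}(t) = 0$ if and only if $E^x(\tau_x) = \infty$.

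The main obstacle I anticipate is the transfer of uniqueness of invariant measures between the embedded chain and the continuous-time chain: although the correspondence $\nu(j) = m(\{j\}) q_j$ is straightforward to write down, one must carefully verify that it is a bijection on the spaces of invariant measures and note that the total masses $m(I)$ and $\nu(I)$ need not coincide, so the claim about finiteness in (4) must be anchored to the CTMC side. The renewal-theoretic step, by contrast, is routine once the non-lattice character of $F_{\tau_x}$ and the direct Riemann integrability of the exponential kernel are verified, both of which hold immediately here.
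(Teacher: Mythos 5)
Your proposal is correct, and it is essentially the classical argument: the paper itself gives no proof of Theorem \ref{thm:2.2} (it defers to \cite{anderson2012continuous,chung1967markov}), and your route --- the excursion-based invariant measure $m(\{j\})=E^x\bigl(\int_0^{\tau_x}\mathbf{1}_{\{X_s=j\}}\,ds\bigr)$ with total mass $E^x(\tau_x)$ and uniqueness up to scalars for (3)$\Leftrightarrow$(4), plus the renewal equation and the key renewal theorem (infinite-mean, non-lattice version) for (2)$\Leftrightarrow$(3) --- is exactly the standard proof found in those references. The only step you gloss over is the passage from $mQ=0$ to $mP_t=m$, which is not immediate on an infinite state space but is a known theorem under the paper's standing regularity (non-explosion) and recurrence assumptions, so no genuine gap remains.
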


In fact, there are many characterizations from other perspectives. We only present those that are needed in this paper. For detailed proofs, refer to \cite{anderson2012continuous,chung1967markov}. 

In the subsequent discussion in this paper, we impose a special constraint condition on \(w(\cdot)\) as follows, which is denoted by Condition (C).  

Condition (C): \(w(t)\) is a non-negative increasing function on \([0,+\infty)\) and satisfies 
\begin{align}
   \lim_{t\rightarrow \infty} w'(t)=0,\,\,w''(t)<0 \mbox{ on }[t_0,+\infty) \mbox{ for some }t_0>0. \tag{C}
\end{align} 
For example, $w(t)=1\vee\log^a t(a>0)$, $w(t)=t^a(0<a<1)$, $\cdots$
Intuitively speaking, a function that satisfies Condition (C) is an increasing function whose growth rate is slower than that of a linear function. Inspired by \cite{Zhao2006null}, the first returning time can be characterized more precisely by considering $E^x(w(\tau_x))$ for different weight functions $w(\cdot)$ satisfying Condition (C).

\section{The first returning speed of CTMCs}

In this section, we will make an estimation for $E^x(w(\tau_x))$. In this way, some equivalent characterizations of $E^x(w(\tau_x))<\infty$ will be obtained. 

On the probability triple $(\Omega,\ff,P)$, let $\{X_t,t\geq 0\}$ be a CTMC with a countable state space $I$. Let $\{G_{\alpha}(x,y),\alpha\geq 0\}$ be the resolvent of $P(t,x,y)$. Particularly for $x,y\in I$, 
\[G_{\alpha}(x,y):=\int_0^\infty e^{-\alpha t}P(t,x,y) dt.\]
For $x,y\in I$, we define a function on $I\times I$ as  
\[S_t (x,y):=\int_0^t P(s,x,y)ds.\] 
Obviously, we have \[\lim_{t\rightarrow +\infty} S_t(x,y)=\lim_{\alpha\downarrow 0}G_{\alpha}(x,y):=G(x,y).\]
Let $F_{xx}(t):=P^x(\tau_x\leq t)$ and $F_{yx}(t):=P^y(T_x\leq t)$ for $x\neq y$, which are 
the distribution functions of $\tau_x$ and $T_x$. According to the Markov property, we have 
\begin{align}\label{equ:3.1}
    P(t,y,x)=\delta_{yx} e^{-q_xt}+\int_0^t P(t-s,x,x)dF_{yx}(s).
\end{align}
In fact, the two random variables $T_x$ and $\tau_x$ are the convolutions of a series of exponential distributions and transition matrices, so both of them have probability density functions. Taking Laplace transforms in (\ref{equ:3.1}), we have  
\[G_\lambda(x,x)=\dfrac{1}{\lambda+q_x}+G_\lambda(x,x)\int_0^{\infty}e^{-\lambda s}dF_{xx}(s),\]
and rearranging gives us 
\begin{align}
    G_\lambda(x,x)&=\dfrac{1}{\lambda+q_x}\dfrac{1}{1-\int_0^{\infty}e^{-\lambda t}dF_{xx}(t)}  \label{equ:3.2}\\
    &= \dfrac{1}{\lambda(\lambda+q_x)}\dfrac{1}{\int_0^{\infty}e^{-\lambda t}(1-F_{xx}(t))dt}.\label{equ:3.3}
\end{align}
Denote the integral of the tail probability distribution of $\tau_x$ by 
\[B_t(x,x):=\int_0^t (1-F_{xx}(s)) ds.\] 
We will utilize these relationships to estimate $E^x(w(\tau_x))$ from different perspectives. Before making subsequent estimates, we need an analytical lemma. 

\begin{lem}\label{lem:3.1}
     Fixed $T>0$. If a non-negative function $f(t)$ is integrable on $[0,+\infty)$ and satisfies that 
    \[\forall a>0,\,\int_a^{a+T} f(t) dt\leq \int_0^{T} f(t) dt, \]  
    then 
    \[1-\dfrac{1}{e}\leq \dfrac{\int_0^T f(t)dt}{\int^\infty_0 e^{-\frac{t}{T}}f(t)dt} \leq e.\]
\end{lem}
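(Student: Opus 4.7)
The plan is to reduce the continuous condition $\int_a^{a+T} f \le \int_0^T f$ to a discrete one by partitioning $[0,\infty)$ into consecutive length-$T$ blocks. Define
\[ I_n := \int_{nT}^{(n+1)T} f(t)\,dt, \qquad n \ge 0, \]
so that the hypothesis (specialized to $a = nT$) immediately gives $I_n \le I_0$ for every $n \ge 0$. The point is that this is all the information about $f$ we will actually need; the weighted integral $\int_0^\infty e^{-t/T} f(t)\,dt$ and the reference integral $I_0 = \int_0^T f(t)\,dt$ are both controlled by the sequence $(I_n)$.

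Next I would bracket the exponential weight on each block. On $[nT,(n+1)T]$ we have $e^{-(n+1)} \le e^{-t/T} \le e^{-n}$, so summing over $n$ yields the two-sided estimate
\[ \frac{1}{e}\sum_{n=0}^{\infty} e^{-n} I_n \;\le\; \int_0^\infty e^{-t/T} f(t)\,dt \;\le\; \sum_{n=0}^{\infty} e^{-n} I_n. \]
From the left-hand inequality and the trivial lower bound $\sum_{n\ge 0} e^{-n} I_n \ge I_0$, one obtains
\[ \int_0^\infty e^{-t/T} f(t)\,dt \;\ge\; \frac{I_0}{e}, \]
which gives the upper bound $\int_0^T f / \int_0^\infty e^{-t/T} f \le e$. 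For the other direction I would apply the uniform bound $I_n \le I_0$ to the right-hand inequality and sum the geometric series:
\[ \int_0^\infty e^{-t/T} f(t)\,dt \;\le\; I_0 \sum_{n=0}^{\infty} e^{-n} \;=\; \frac{e}{e-1}\, I_0, \]
which rearranges to $\int_0^T f / \int_0^\infty e^{-t/T} f \ge 1 - 1/e$.

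There is really no main obstacle here: the proof is essentially a one-line observation once one partitions by length $T$. The only thing to be a little careful about is that the ratio is well defined, i.e.\ the denominator is strictly positive whenever the numerator is — but this follows from the first estimate $\int_0^\infty e^{-t/T} f\,dt \ge I_0/e$, and in the degenerate case $I_0 = 0$ the hypothesis forces $I_n = 0$ for every $n$, hence $f \equiv 0$ a.e.\ and the inequalities hold vacuously (or one simply restricts to the non-trivial case). I would note in passing that the full strength of the hypothesis (for arbitrary $a$, not just $a = nT$) is not used, which suggests the bounds $1-1/e$ and $e$ in the statement are sharp and attained in the limit by mass concentrated near $0$ and by mass spread uniformly over $[0,\infty)$ respectively.
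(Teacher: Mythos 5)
Your proof is correct and takes essentially the same route as the paper: the upper bound comes from bounding the weight $e^{-t/T}$ below by $e^{-1}$ on $[0,T]$, and the lower bound from partitioning $[0,\infty)$ into length-$T$ blocks, invoking the hypothesis only at $a=nT$, and summing the geometric series $\sum_n e^{-n}$. (Only your closing sharpness aside is slightly off: the ratio tends to $e$ when the mass concentrates near $t=T$, not near $0$, where the ratio would tend to $1$.)
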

\begin{proof}
    
For the upper bound, we have 
\begin{align*}
    \int^\infty_0 e^{-\frac{t}{T}}f(t)dt \geq  \int^T_0 e^{-\frac{t}{T}}f(t)dt
     \geq e^{-1}\int^T_0 f(t)dt.     
\end{align*}
For the lower bound, we have 
\begin{align*}
    \int^\infty_0 e^{-\frac{t}{T}}f(t)dt &= \sum_{M=0}^\infty \int^{(M+1)T}_{MT} e^{-\frac{t}{T}}f(t)dt \\
    &\leq \sum_{M=0}^\infty e^{-M}\int^{(M+1)T}_{MT} f(t)dt \\
    &\leq \sum_{M=0}^\infty e^{-M}\int^{T}_{0} f(t)dt\\
    &= \dfrac{1}{1-e^{-1}} \int^{T}_{0} f(t)dt.
\end{align*}
\end{proof}

 By Lemma \ref{lem:3.1}, we can give the following propositions. 

\begin{prop}\label{prop:3.2}
For $t>0$ sufficiently large, we have following estimations.  
\begin{enumerate}[(1)]
    \item  $1-\dfrac{1}{e}\leq \dfrac{S_t(x,x)}{G_{1/t}(x,x)} \leq e.$
    \item  $1-\dfrac{1}{e}\leq \dfrac{B_t(x,x)}{\int^\infty_0 e^{-\frac{s}{t}}(1-F_{xx}(s))ds} \leq e.$
    \item  $\dfrac{1}{2q_x}\leq \dfrac{G_{1/t}(x,x) \int^\infty_0 e^{-\frac{s}{t}}(1-F_{xx}(s))ds }{t} \leq \dfrac{1}{q_x}.$
\end{enumerate}
\end{prop}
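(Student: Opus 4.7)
My plan is to treat the three parts in parallel but with different tools: Lemma \ref{lem:3.1} applied to carefully chosen integrands for (1) and (2), and the Laplace-transform identity (\ref{equ:3.3}) used directly for (3).

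For part (1), I would set $T=t$ and $f(s)=P(s,x,x)$, so that the two integrals appearing in Lemma \ref{lem:3.1} become precisely $S_t(x,x)$ and $G_{1/t}(x,x)$. The only nontrivial step is to verify the translation-subadditivity hypothesis $\int_a^{a+t}P(s,x,x)\,ds\le \int_0^t P(s,x,x)\,ds$ for every $a>0$. I would establish this in two pieces. First, Chapman--Kolmogorov combined with a change of variables gives $\int_a^{a+t}P(s,x,x)\,ds=\sum_{y\in I}P(a,x,y)\,S_t(y,x)$. Second, the first-passage decomposition of $P(s,y,x)$ (the same one already used to derive (\ref{equ:3.1})) yields, for $y\neq x$,
\[
S_t(y,x)=\int_0^t S_{t-u}(x,x)\,dF_{yx}(u)\le S_t(x,x)\,F_{yx}(t)\le S_t(x,x),
\]
with equality being trivial when $y=x$. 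Summing against the probability measure $P(a,x,\cdot)$ then produces the required inequality, and Lemma \ref{lem:3.1} delivers (1).

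For part (2), I would again invoke Lemma \ref{lem:3.1}, now with $f(s)=1-F_{xx}(s)$ and $T=t$; the hypothesis is immediate because $F_{xx}$ is a distribution function, so $1-F_{xx}$ is non-increasing and the integral over $[a,a+t]$ is automatically bounded by the integral over $[0,t]$. Part (3) requires no appeal to Lemma \ref{lem:3.1}: substituting $\lambda=1/t$ into the identity (\ref{equ:3.3}) gives exactly
\[
\frac{G_{1/t}(x,x)\displaystyle\int_0^\infty e^{-s/t}(1-F_{xx}(s))\,ds}{t}=\frac{t}{1+q_x t},
\]
whence the upper bound $1/q_x$ is automatic, and the lower bound $1/(2q_x)$ is equivalent to $q_x t\ge 1$, which is precisely the ``$t$ sufficiently large'' quantifier in the statement.

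The main obstacle is the inequality $S_t(y,x)\le S_t(x,x)$ appearing in the proof of (1); the rest reduces to monotonicity or Laplace-transform algebra. This inequality encodes the intuitive fact that, over a time window of length $t$, the expected occupation time of state $x$ is largest when the chain starts at $x$ itself, and it is forced by decomposing the path from $y$ according to when it first reaches $x$ as above.
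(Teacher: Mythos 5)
Your proposal is correct and follows essentially the same route as the paper: part (1) via the first-passage decomposition of $P(s,y,x)$ to get $S_t(y,x)\le S_t(x,x)$ and Chapman--Kolmogorov to verify the hypothesis of Lemma \ref{lem:3.1}, part (2) via monotonicity of $1-F_{xx}$, and part (3) by setting $\lambda=1/t$ in (\ref{equ:3.3}). The only difference is cosmetic (you apply Chapman--Kolmogorov before rather than after the first-passage bound), so nothing further is needed.
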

\begin{proof}
    \begin{enumerate}[(1)]
        \item For any $x,y\in I,T>0,$     
    \begin{align*}
        \int_0^T P(t,y,x)dt&=\int_0^T\int_0^t P(t-s,x,x)dF_{yx}(s)dt\\
        &=\int_0^T dF_{yx}(s)\int_s^T P(t-s,x,x)dt\\
        &\leq \int_0^T P(t,x,x)dt.
    \end{align*}
    Apply the above inequality, for any $a,T>0$, we have
    \begin{align*}
        \int_{a}^{a+T} P(t,x,x)dt&=\int_{a}^{a+T} \sum_{y\in I} P(a,x,y)P(t-a,y,x)dt\\
        &= \sum_{y\in I} P(a,x,y)\int_{a}^{a+T} P(t-a,y,x) dt\\
        &= \sum_{y\in I} P(a,x,y)\int_{0}^{T} P(t,y,x) dt\\
        &\leq \sum_{y\in I} P(a,x,y)\int_{0}^{T} P(t,x,x) dt\\
        &=\int_{0}^{T} P(t,x,x)dt.
    \end{align*}
    Thus, by Lemma \ref{lem:3.1}, the proof is completed.

    \item Since $B_t(x,x)=\int_0^t P^x(\tau_x>s) ds$, for any $a,T>0$, it satisfies 
     \[\int_a^{a+T} P^x(\tau_x>s) ds\leq \int_0^{T} P^x(\tau_x>s) ds. \]  
     By Lemma \ref{lem:3.1}, we obtain the second proposition.

    \item By (\ref{equ:3.3}), take $\lambda=1/t$, then we have 
    \[\dfrac{G_{1/t}(x,x) \int^\infty_0 e^{-\frac{s}{t}}(1-F_{xx}(s))ds }{t}=\dfrac{1}{\dfrac{1}{t}+q_x}.\] 
    For $t>0$ sufficiently large, the third proposition is obviously valid.
    \end{enumerate}
\end{proof}

For two non-negative measurable functions $f(t)$ and $g(t)$ on $[0,+\infty)$, we say that \(f(t)\sim g(t)\) if there exist constants $C_2>C_1>0$ such that \(C_1g(t)\leq f(t) \leq C_2g(t)\) for all sufficiently large $t$. Furthermore, if \(f(t)\sim g(t)\), then the convergence properties of \(\int^\infty f(t) dt\) and \(\int^\infty g(t) dt\) are the same. For such two integrals, we also write \(\int^\infty f(t) dt\sim \int^\infty g(t) dt.\)

The most important result in this section will be presented. We will utilize the propositions above to give an estimate of $E^x(w(\tau_x))$.  

\begin{thm}\label{thm:3.3}
    If $w(\cdot)$ satisfies (C), then the following conditions are mutually equivalent. 
\begin{enumerate}[(1)]
    \item $E^x(w(\tau_x))<\infty.$
    \item $\int^\infty |w''(t)|\frac{t}{S_t(x,x)} dt<\infty.$
    \item $\int^\infty |w''(t)|\frac{t}{G_{1/t}(x,x)} dt<\infty.$
    \item $\int^\infty |w''(t)| (t-tE^x(e^{-\tau_x/t}))dt<\infty.$
\end{enumerate}
\end{thm}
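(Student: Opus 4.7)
The plan is to show that all four conditions are equivalent to the single quantity
\[
I(w):=\int^{\infty} |w''(t)|\,B_t(x,x)\,dt,
\]
where $B_t(x,x)=\int_0^t(1-F_{xx}(s))\,ds$. Then $(2)\Leftrightarrow(3)\Leftrightarrow(4)\Leftrightarrow I(w)<\infty$ will follow directly from Proposition \ref{prop:3.2}, and the equivalence $(1)\Leftrightarrow I(w)<\infty$ will use Condition (C) plus a Fubini argument.

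For the chain (2)--(3)--(4), I first observe that Proposition \ref{prop:3.2}(1) gives $S_t(x,x)\sim G_{1/t}(x,x)$, so $\frac{t}{S_t(x,x)}\sim\frac{t}{G_{1/t}(x,x)}$, settling $(2)\Leftrightarrow(3)$. Next, combining Proposition \ref{prop:3.2}(2) and (3) yields $\frac{t}{G_{1/t}(x,x)}\sim\int_0^\infty e^{-s/t}(1-F_{xx}(s))\,ds\sim B_t(x,x)$, which identifies (2) and (3) with $I(w)<\infty$. For (4), integration by parts gives
\[
t-tE^x(e^{-\tau_x/t})=t\int_0^\infty(1-e^{-s/t})\,dF_{xx}(s)=\int_0^\infty e^{-s/t}(1-F_{xx}(s))\,ds,
\]
and Proposition \ref{prop:3.2}(2) again pins this to $B_t(x,x)$, so (4) is $I(w)<\infty$ as well.

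The substantive step is $(1)\Leftrightarrow I(w)<\infty$. Under Condition (C), since $w'(t)\to 0$ and $w''<0$ on $[t_0,\infty)$, I can write $w'(u)=\int_u^\infty|w''(v)|\,dv$ for $u\ge t_0$, and then integrate once more from $t_0$ to $s$:
\[
w(s)=w(t_0)+\int_{t_0}^s|w''(v)|(v-t_0)\,dv+(s-t_0)\int_s^\infty|w''(v)|\,dv,\qquad s\ge t_0.
\]
Plugging this into $E^x(w(\tau_x))=\int_0^\infty w(s)\,dF_{xx}(s)$, splitting at $t_0$ so that the $[0,t_0]$ portion contributes a harmless constant, and switching the order of integration in both of the resulting iterated integrals, the $s$-integrals collapse (using $\int_{t_0}^v (s-t_0)\,dF_{xx}(s)=(v-t_0)F_{xx}(v)-\int_{t_0}^v F_{xx}(s)\,ds$) to give, up to a bounded additive constant,
\[
E^x(w(\tau_x))\;\asymp\;\int_{t_0}^\infty |w''(v)|\int_{t_0}^v(1-F_{xx}(s))\,ds\,dv,
\]
which differs from $I(w)$ by at most $\int_0^{t_0}(1-F_{xx}(s))\,ds\cdot w'(t_0)<\infty$. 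Hence $(1)\Leftrightarrow I(w)<\infty$.

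The main obstacle I expect is bookkeeping around the threshold $t_0$: Condition (C) only guarantees concavity past $t_0$, so the $w''$-representation of $w$ is valid only on $[t_0,\infty)$, and I must argue carefully that the contributions on $[0,t_0]$ contribute only bounded additive constants (using $w'(t_0)=\int_{t_0}^\infty|w''|\,dv<\infty$) so that the Fubini swap really does reduce (1) to $I(w)<\infty$. Once that is handled cleanly, the equivalences $(2)$--$(4)$ with $I(w)<\infty$ are essentially direct applications of Proposition \ref{prop:3.2} and the $\sim$-calculus introduced immediately before the theorem.
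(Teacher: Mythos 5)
Your proposal is correct and follows essentially the same route as the paper: both reduce condition (1) via a Fubini/integration-by-parts argument to the finiteness of $\int^\infty |w''(t)|\int^t(1-F_{xx}(s))\,ds\,dt$ (your $I(w)$, the paper's $\int_k^t P^x(\tau_x\geq s)\,ds$ weighted by $|w''|$), and then invoke Proposition \ref{prop:3.2} together with the Laplace-transform/resolvent identity to pass among $B_t(x,x)$, $t/S_t(x,x)$, $t/G_{1/t}(x,x)$ and $t-tE^x(e^{-\tau_x/t})$. The only cosmetic difference is that you derive the identity $t-tE^x(e^{-\tau_x/t})=\int_0^\infty e^{-s/t}(1-F_{xx}(s))\,ds$ directly by integration by parts, whereas the paper reads it off from (\ref{equ:3.2})--(\ref{equ:3.3}); your more explicit bookkeeping around the threshold $t_0$ is fine.
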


\begin{proof}
    By Fubini Theorem, for any $k>0$, we have
    \begin{align*}
        E^x(w(\tau_x)-w(k);\tau_x\geq k)&=E^x\left(\int_k^\infty w'(t)1_{\{t\leq \tau_x\}} dt\right)\\
        &= \int_k^\infty w'(t)P^x(\tau_x\geq t) dt\\
        &= \int_k^\infty -w''(t) dt\int_k^t P^x(\tau_x\geq s) ds.
    \end{align*}
Due to Proposition \ref{prop:3.2}, we have 
\begin{align*}
    \int_k^t P^x(\tau_x\geq s)ds &\sim \int_0^t (1-F_{xx}(s)) ds \\
    &\sim\dfrac{t}{S_t(x,x)}\sim \dfrac{t}{G_{1/t}(x,x)}.
\end{align*}
By (\ref{equ:3.2}), we have 
\[\dfrac{t}{G_{1/t}(x,x)}=(1+tq_x)(1-E^x(e^{-\tau_x/t}))\sim t-tE^x(e^{-\tau_x/t}).\]
That completes the proof.
\end{proof}
Due to the Ratio Limit Theorem, which we can refer to section II.12 in \cite{chung1967markov}, if the above formula holds for a certain \(x\in I\), then it holds for all \(x\in I\). Therefore, $E^x(w(\tau_x))<\infty$ is a class property. In particular, if $w(t)=t^a$ and $0<a<1$, we have 
\begin{align}
    E^x(\tau_x^a)<\infty&\Leftrightarrow \int^\infty \frac{t^{a-1}}{S_t(x,x)} dt<\infty \label{equ:3.4}\\ & \Leftrightarrow \int^\infty \frac{t^{a-1}}{G_{1/t}(x,x)} dt<\infty \label{equ:3.5}
    \\ &\Leftrightarrow \int^\infty t^{a-1}(1-E^x(e^{-\tau_x/t}))dt<\infty. \label{equ:3.6}
\end{align}

\begin{ex}
    Consider a birth-death process with the state space $I=\zz$. Each state $n$ transitions to $n+1$ and $n-1$ at a rate of $1$, that is, $q_{n,n+1}=q_{n,n-1}=1$. The invariant measure of the process is not a probability measure, so the process is null recurrent. The embedded chain of this process is a simple random walk. 

    For the function $f:\zz \rightarrow \re$, we have 
    \[Qf(n)=f(n+1)+f(n-1)-2f(n).\] Perform the Fourier transform on the function $f(n)$, where $F(\theta)=\sum_{n\in \zz} f(n)e^{in\theta}$, then we have 
    \begin{align*}
    \ff(Qf)(\theta)&=\sum_{n\in\zz} f(n)e^{in\theta}(e^{i\theta}+e^{-i\theta}-2)\\
    &=(2\cos\theta-2)F(\theta),
    \end{align*}
    so that $\ff(G_{\lambda})=(\lambda-(2\cos\theta-2))^{-1}$.
    By the inverse Fourier transform, we can obtain 
    \[G_{\lambda}(n,m)=\dfrac{1}{2\pi}\int_{-\pi}^\pi \dfrac{e^{i(n-m)\theta}}{\lambda+4\sin^2(\theta/2)} d\theta=\dfrac{1}{\sqrt{\lambda(\lambda+4)}}\left(\dfrac{\lambda+2-\sqrt{\lambda(\lambda+4)}}{2}\right)^{|n-m|}.\]
    By Theorem \ref{thm:3.3}, 
    \begin{align*}
        \int_k^\infty \frac{t^{a-1}}{G_{1/t}(x,x)} dt= \int_k^\infty t^{a-1}\sqrt{\dfrac{1}{t}\left(\dfrac{1}{t}+4
        \right)}dt \sim \int^\infty t^{a-1}\dfrac{2}{\sqrt{t}} dt.  
    \end{align*}
     Hence, for $a>0$, we have $E^x(\tau_x^a)<\infty$ if and only if $a<1/2$.
\end{ex}

\begin{ex}
 Consider a CTMC with the state space $I=\zz^2$. Each state $(m,n)$ transitions to the four adjacent lattice points $(m,n\pm1)$ and $(m\pm 1,n)$ at a rate of $1$. it is easy to check the process is null recurrent. The embedded chain of this process is two-dimensional simple random walk.

 Similarly, through the Fourier transform, we can obtain 
 \[G_{\lambda}((0,0),(k,l))=\dfrac{1}{4\pi^2} \int_{-\pi}^{\pi}\int_{-\pi}^{\pi}\dfrac{e^{i(k\theta_1+l\theta_2)}}{\lambda+1-\frac{1}{2}(\cos \theta_1+\cos \theta_2)}d\theta_1d\theta_2.\]
 According to Chapter 15 in \cite{spitzer2001principles}, we have 
 \[G_{1/t}((0,0),(0,0))\sim \dfrac{1}{\pi}\log 4t.\]
 If we take $w(t)=t^a(0<a<1)$, then the integral in (\ref{equ:3.5}) is always divergent. Assume that $w(t)=\log^at$ where $a>0$. By Theorem \ref{thm:3.3}, consider that 
 \begin{align*}
     \int^\infty |w''(t)|\frac{t}{G_{1/t}((0,0),(0,0))} dt&\sim  \int^\infty \dfrac{-a(a-1-\log t)(\log t)^{a-2}}{t^2}\dfrac{\pi t}{\log t} dt\\
     &=a(1-a)\pi \int^\infty \dfrac{(\log t)^{a-3}}{t} dt+ a\pi\int^\infty \dfrac{(\log t)^{a-2}}{t} dt\\
     &\sim \int^\infty \dfrac{(\log t)^{a-2}}{t} dt.
 \end{align*}
Hence, for $a>0$, we have $E^x(\log^a\tau_x)<\infty$ if and only if $a<1$.
\end{ex}

\section{Reversible CTMCs and electric networks}
In this section, we establish the connection between reversible CTMCs and electric networks. By utilizing the properties of electric networks, we will provide a characterization of null-recurrence and first returning speed. Meanwhile, some general estimates of effective conductance for a class of electric networks will be obtained.

\begin{defi}
    Let $(I,E)$ be a connected graph, endowed with non-negative edge weights $C=\{c_e:e\in E\}$, which is called conductance. A triple $(I,E,C)$ is called \textbf{an electric network}.
\end{defi}
 For discrete-time Markov chains, when it comes to characterizing recurrence and transience using electric networks, one can refer to \cite{nachmias2020planar}\cite{doyle1984random}\cite{Zhao2006null}. We expound the relationship between CTMC and the electric network in a closer way. 

On the probability triple $(\Omega,\ff,P)$, let $\{X_t,t\geq 0\}$ be a reversible and irreducible CTMC with a countable state space $I$ provided that there exists a measure $\{m_x,x\in I\}$ such that \[m_xP_{xy}(t)=m_yP_{yx}(t), \,\,x,y\in I,\,t\geq 0. \]
This amounts to 
\[m_xq_{xy}=m_yq_{yx},\,\,x,y\in I. \]
We start with a given $Q-$matrix $(q_{xy})$ to construct the corresponding electric network.

For the electric network $(I,E,C)$, we make the vertex set $I$ coincide with the state space of $\{X_t,t\geq 0\}$, and the edge set is composed of those pairs of vertices that are connected with a positive rate
\[E=\{xy:x,y\in I, q_{xy}>0\}.\] We define the conductance as 
\[c_{xy}=m_xq_{xy},\] and resistance $r_{xy}=c_{xy}^{-1}$. We use the notation \(c_x=\sum_{y\in I} c_{xy}.\) In this way, we define an electric network corresponding to the CTMC. 
\begin{defi}
    Given a network $(I,E,C)$ and two boundaries $a,b\in I$, \textbf{a voltage} $v(x)$ is a function $v:I\rightarrow \re$ that is harmonic at any $x\in I\backslash\{a,b\}$. We say $v(x)$ as the \textbf{unit voltage} between $a$ and $b$, that is, $v$ is the voltage, and $v(a)=1,v(b)=0.$
\end{defi}
Fixing the values at the two boundaries, the voltage $v$ is unique by the property of harmonic function. 

Denote $R_{x\leftrightarrow y}$ by the {\bf effective resistance} between $x$ and $y$ in the network. Effective resistance captures the equivalent resistance between two nodes in a network, considering all possible paths (series/parallel combinations). It links to recurrence and transience in Markov chains. Its reciprocal is the {\bf effective conductance}, denoted by $C_{x\leftrightarrow y}$. Simply say, recurrence and transience are equivalent to whether the effective conductance of the electric network is finite. 

For more refined characterizations such as null recurrence, we refer to the method in \cite{Zhao2006null}, we construct the electric networks to calculate $E^x(e^{-\lambda T_a})$. Taking a new state $\Delta\notin I$, define \(I_{\Delta}=I\cup\{\Delta\}\) and \(E_{\Delta}=E\cup\{x\Delta,\Delta x; x\in I\}\). Regard \(\{X_t, t\geq0\}\) as a CTMC on \(I_{\Delta}\), where \(\Delta\) is an absorbing state, that is, \(P_{\Delta}(X_t=\Delta) = 1\) for $t\geq 0$. For a parameter $\lambda>0$, we define the conductance as 
\[c_{x\Delta}=c_{\Delta x}=\dfrac{\lambda}{q_x}c_x.\]Intuitively speaking, all the nodes in $I$ undergo a grounding process. Denote the corresponding new network by $(I_{\Delta},E_{\Delta},C_{\Delta})$. Different from the original process \(\{X_t, t\geq0\}\), we use $P_{\Delta}$ to denote the transition probability on the new network, then we have 
\[P_{\Delta}(x,y)=\begin{cases}
    \dfrac{q_x}{q_x+\lambda}\dfrac{c_{xy}}{c_x}, &x,y\neq \Delta,\\
    \dfrac{\lambda}{q_x+\lambda},&x\neq\Delta, y=\Delta.
\end{cases}\]
According to Theorem 3.2.1 in \cite{Zhou2020hitting}, we have 
\begin{align}\label{equ:4.1}
    E^x(e^{-\lambda \tau_x})=1-\dfrac{q_x}{q_x+\lambda}\dfrac{C_{x\leftrightarrow\Delta}(\lambda)}{c_x}.
\end{align}
Here we use the notation $C_{x\leftrightarrow\Delta}(\lambda)$ to denote the effective conductance because we will take $\lambda$ as a variable subsequently. 
\begin{defi}
    The \textbf{energy} of a function $f:I\rightarrow \re$ in $(I_{\Delta},E_{\Delta},C_{\Delta})$, denoted by $\ene(f)$, is defined as \[\ene(f):=
    \dfrac{1}{2}\sum_{x,y\in I} (f(x)-f(y))^2c_{xy}+\sum_{x\in I} f^2(x)\dfrac{\lambda}{q_x}c_{x}.\]
\end{defi}
Following Dirichlet's principle of minimum energy and Theorem 3.2.2 in \cite{Zhou2020hitting}, we have 
\begin{align}\label{equ:4.2}
    C_{a\leftrightarrow \Delta}(\lambda)=\inf\{\ene(f):f(a)=1\}.
\end{align}
The energy attains the minimum value if and only if \(f\) is the unit voltage from \(a\) to \(\Delta\) in \((I_{\Delta}, E_{\Delta}, C_{\Delta})\), which is $f(x)=E^x(e^{-\lambda T_a})$. Thus, we are able to characterize $E^x(w(\tau_x))<\infty$ from the perspective of electric networks, which is the first main result in this section.

\begin{thm}\label{thm:4.4}
    For a CTMC \(\{X_t, t\geq0\}\) on $I$, define the electric network \((I_{\Delta}, E_{\Delta}, C_{\Delta})\) as above. If $w(x)$ satisfies Condition (C), then the following two assertions are equivalent. 
    \begin{enumerate}[(1)]
        \item For $x\in I$, $E^x(w(\tau_x))<\infty$.
        \item For $x\in I$, $\int^\infty t |w''(t)|C_{x\leftrightarrow\Delta}(1/t) dt<\infty.$
    \end{enumerate}
\end{thm}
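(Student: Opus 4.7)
The plan is to chain together Theorem \ref{thm:3.3} with the electric-network formula (\ref{equ:4.1}), so almost no new work is required: the whole statement reduces to checking that the integrand in condition (4) of Theorem \ref{thm:3.3} is comparable to the integrand in (2) of the current theorem for $t$ large.

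First, I invoke the equivalence of (1) and (4) in Theorem \ref{thm:3.3}: $E^x(w(\tau_x))<\infty$ holds if and only if
\[
\int^\infty |w''(t)|\bigl(t-tE^x(e^{-\tau_x/t})\bigr)\,dt<\infty.
\]
Next, I substitute $\lambda=1/t$ into (\ref{equ:4.1}) to rewrite the integrand in purely network terms:
\[
t-tE^x(e^{-\tau_x/t})=t\cdot\frac{q_x}{q_x+1/t}\cdot\frac{C_{x\leftrightarrow\Delta}(1/t)}{c_x}=\frac{tq_x}{q_x+1/t}\cdot\frac{C_{x\leftrightarrow\Delta}(1/t)}{c_x}.
\]

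Then I observe that as $t\to\infty$ the factor $\frac{tq_x}{q_x+1/t}$ is asymptotically equivalent to $t$: once $t\ge 1/q_x$ one has $q_x\le q_x+1/t\le 2q_x$, hence $t/2\le \frac{tq_x}{q_x+1/t}\le t$. Since $c_x$ is a fixed positive constant (for the fixed reference state $x$), this gives the comparison
\[
t-tE^x(e^{-\tau_x/t})\ \sim\ t\,C_{x\leftrightarrow\Delta}(1/t)
\]
in the sense of the relation $\sim$ introduced just before Theorem \ref{thm:3.3}. Consequently,
\[
\int^\infty |w''(t)|\bigl(t-tE^x(e^{-\tau_x/t})\bigr)\,dt\ \sim\ \int^\infty |w''(t)|\,t\,C_{x\leftrightarrow\Delta}(1/t)\,dt,
\]
and the two integrals converge or diverge simultaneously. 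Combining this with the reformulation from Theorem \ref{thm:3.3} yields the claimed equivalence of (1) and (2).

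There is no real obstacle here; the only point worth flagging is making sure the class-property remark after Theorem \ref{thm:3.3} is used to justify that the equivalence holds for every $x\in I$, not just for a distinguished reference point, and that $c_x<\infty$ (which follows from the standing assumption that $Q$ is regular so that $q_x<\infty$, together with reversibility $c_x=m_xq_x$).
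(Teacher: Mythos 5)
Your proposal is correct and follows essentially the same route as the paper: apply the equivalence (1)$\Leftrightarrow$(4) of Theorem \ref{thm:3.3}, substitute $\lambda=1/t$ into (\ref{equ:4.1}) so that $t-tE^x(e^{-\tau_x/t})=\frac{t^2q_x}{tq_x+1}\frac{C_{x\leftrightarrow\Delta}(1/t)}{c_x}\sim t\,C_{x\leftrightarrow\Delta}(1/t)$, and conclude. Your explicit two-sided bound on the prefactor and the remarks on the class property and finiteness of $c_x$ are harmless additions to the same argument.
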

\begin{proof}
    By (\ref{equ:4.1}), we have \[E^x(e^{-\tau_x/t})=1-\dfrac{tq_x}{tq_x+1}\dfrac{C_{x\leftrightarrow\Delta}(1/t)}{c_x}. \]
    By substituting the fourth condition from Theorem \ref{thm:3.3}, we can obtain 
    \begin{align*}
        \int^\infty |w''(t)| (t-tE^x[(\exp(-1/t))^{\tau_x}])dt&=\int^\infty |w''(t)|\dfrac{t^2q_x}{tq_x+1}\dfrac{C_{x\leftrightarrow\Delta}(1/t)}{c_x}dt\\
        &\sim \int^\infty  t |w''(t)|C_{x\leftrightarrow\Delta}(1/t) dt.
    \end{align*}
\end{proof}
If we can obtain the asymptotic estimate of \(C_{x\leftrightarrow\Delta}(1/t)\) through the techniques of the electric networks, then we can get the characterization of \(E^x(w(\tau_x))<\infty\). In Section 5, we will make use of this idea to deal with the birth-death process. 

We now state two important techniques to estimate the effective conductance.
\begin{defi}[{\bf Short circuit}]
    There exists an equivalence relation $\sim$ on \(I\). Let \(\widetilde{x}\) be the equivalence class of \(x\), that is, \(\widetilde{x}=\{y\in I:y\sim x\}\). Let \(I^*=\{\widetilde{x}:x\in I\}\), \(c_{\widetilde{x}\widetilde{y}}^*=\sum_{x\in\widetilde{x},y\in\widetilde{y}}c_{xy}\), and \(E^* = \{\widetilde{x}\widetilde{y}:c_{\widetilde{x}\widetilde{y}}^*>0\}\). Then, we say that the electric network \((I^*, E^*, C^*)\) is obtained by short-circuiting the electric network \((I, E, C)\).  
\end{defi}
\begin{defi}[{\bf Open circuit}]
If \(I^* \subset I\), \(E^* \subset E\), and for all \(xy \in E^*\), \(C_{xy}^* = C_{xy}\), then we say the electric network \((I^*, E^*, C^*)\) is obtained by open-circuiting the electric network \((I, E, C)\). 
\end{defi}

We can regard a short circuit as the conductance between two nodes in the original network becoming $\infty$, and an open circuit as the conductance between two nodes in the original network becoming $0$. According to 2.3 in \cite{lyons2017probability}, we have Rayleigh’s Monotonicity Principle for $(I_{\Delta},E_{\Delta},C_{\Delta})$. 

\begin{thm}\label{thm:4.7}
If \((I_{\Delta}, E_{\Delta}, C_{\Delta})\) is an electric network, then we have the following statement. 
    \begin{enumerate}
        \item Short-Circuit Rule: If \((I^*_{\Delta}, E^*_{\Delta}, C^*_{\Delta})\) is obtained by short-circuiting \((I_{\Delta}, E_{\Delta}, C_{\Delta})\), then for any \(x \in I\), the effective conductance from \(x\) to \(\Delta\) satisfies  
    \[C_{x \to \Delta}^*(\lambda) \geq C_{x \to \Delta}(\lambda).\]  
    (Intuitively, short-circuiting nodes increases the effective conductance.)
    \item Open-Circuit Rule: If \((I^*_{\Delta}, E^*_{\Delta}, C^*_{\Delta})\) is obtained by opening \((I_{\Delta}, E_{\Delta}, C_{\Delta})\), then for any \(x \in I\), the effective conductance from \(x\) to \(\Delta\) satisfies  \[C_{x \to \Delta}^*(\lambda) \leq C_{x \to \Delta}(\lambda).\]  
    (Intuitively, opening edges or removing nodes decreases the effective conductance.)
    \end{enumerate}
\end{thm}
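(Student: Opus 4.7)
The plan is to deduce both monotonicity statements from Dirichlet's variational principle (\ref{equ:4.2}), which represents \(C_{x\leftrightarrow\Delta}(\lambda)\) as the infimum of \(\ene(f)\) over all \(f\) with \(f(x)=1\); here the convention \(f(\Delta)=0\) is absorbed into the term \(\sum_{x\in I}f^2(x)\lambda c_x/q_x\), which encodes the contribution of the grounding edges \(x\Delta\). I treat both short- and open-circuiting as operations that change either the admissible set of test functions or the collection of summands appearing in \(\ene\), while leaving the boundary condition at \(x\) intact.

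For the short-circuit rule, given any \(g\) on \(I^*_{\Delta}\) with \(g(\widetilde{x})=1\), lift it to \(\tilde g(y)=g(\widetilde{y})\) on \(I_\Delta\). Grouping the double sum in \(\ene(\tilde g)\) by pairs of equivalence classes and using \(c^{*}_{\widetilde{u}\widetilde{v}}=\sum_{y\in\widetilde{u},\,z\in\widetilde{v}}c_{yz}\), together with \(c_{y\Delta}=\lambda c_y/q_y=\lambda m_y\) (so that \(c^{*}_{\widetilde{u}\Delta}=\lambda\sum_{y\in\widetilde{u}}m_y\)), I expect the identity \(\ene^{*}(g)=\ene(\tilde g)\). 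The lifted functions form precisely the subclass of \(\{f:f(x)=1\}\) that are identically \(1\) on all of \(\widetilde{x}\), which is in general strictly smaller than the full admissible class. Infimizing the same functional over a smaller set yields a larger (or equal) value, hence \(C^{*}_{x\leftrightarrow\Delta}(\lambda)\geq C_{x\leftrightarrow\Delta}(\lambda)\).

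For the open-circuit rule the comparison runs in the opposite direction. Any \(f:I_\Delta\to\re\) with \(f(x)=1\) restricts to \(f|_{I^{*}_{\Delta}}\), which still takes the value \(1\) at \(x\). Because \(E^{*}\subset E\) with conductances inherited, \(\ene^{*}(f|_{I^{*}_{\Delta}})\) is obtained from \(\ene(f)\) by dropping only non-negative terms (those corresponding to edges or nodes removed from the network), so \(\ene^{*}(f|_{I^{*}_{\Delta}})\leq\ene(f)\). Conversely, every admissible function on \(I^{*}_{\Delta}\) is the restriction of some admissible function on \(I_\Delta\) (extend arbitrarily, e.g.\ by \(0\) on \(I\setminus I^{*}\)), so taking infima on both sides yields \(C^{*}_{x\leftrightarrow\Delta}(\lambda)\leq C_{x\leftrightarrow\Delta}(\lambda)\).

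The step requiring the most care is the bookkeeping in the short-circuit case, namely verifying that the grounding contribution \(\sum_{y\in I}f^{2}(y)\lambda c_y/q_y\) descends correctly to \(\sum_{\widetilde{u}\in I^{*}}g^{2}(\widetilde{u})\,c^{*}_{\widetilde{u}\Delta}\) under the identification. This is precisely where the particular structure of \((I_\Delta,E_\Delta,C_\Delta)\)---as opposed to an abstract electric network---enters: one appeals to the reversibility identity \(c_{y\Delta}=\lambda m_y\), which makes additivity over equivalence classes immediate. Once this point is settled, both parts reduce to purely set-theoretic comparisons of admissible classes in Dirichlet's principle, and no further analytic work is needed.
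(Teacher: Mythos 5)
Your proof is correct, and it is worth noting that it does more than the paper itself, which offers no argument for Theorem \ref{thm:4.7}: the text simply invokes Rayleigh's Monotonicity Principle from Section 2.3 of \cite{lyons2017probability}. You supply the missing variational proof directly from the paper's own Dirichlet principle (\ref{equ:4.2}): for shorting, the lift $\tilde g(y)=g(\widetilde{y})$ kills intra-class edge terms, the cross-class terms aggregate to $c^*_{\widetilde{u}\widetilde{v}}$, and the grounding terms aggregate because $c_{y\Delta}=\lambda c_y/q_y=\lambda m_y$, so $\ene^*(g)=\ene(\tilde g)$ and $C^*_{x\leftrightarrow\Delta}(\lambda)$ is an infimum of the same functional over a smaller admissible class, hence larger; for cutting, restriction only drops non-negative summands. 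This buys a self-contained treatment that explicitly checks the one feature distinguishing $(I_\Delta,E_\Delta,C_\Delta)$ from an abstract network, namely the $\Delta$-edges, whereas the cited reference proves monotonicity via Thomson's principle for flows and deduces the shorting and cutting laws from it; both routes are standard and equally rigorous. Three small touch-ups: you should state the standing assumptions that $\Delta$ is not identified with $x$ in the shorted network and that $x,\Delta\in I^*_\Delta$ in the opened one; you should note that (\ref{equ:4.2}) is being applied in its general-network form to the modified networks (harmless, since the shorted network is again a grounded network, with $m^*_{\widetilde{u}}=\sum_{y\in\widetilde{u}}m_y$, and the general Dirichlet principle covers the opened one even when ground edges are deleted); and in the open-circuit part the final ``conversely'' sentence is superfluous, since $\ene^*(f|_{I^*_\Delta})\le\ene(f)$ for every admissible $f$, together with admissibility of the restriction, already yields $C^*_{x\leftrightarrow\Delta}(\lambda)\le C_{x\leftrightarrow\Delta}(\lambda)$. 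Likewise, the lifted functions are precisely the admissible functions constant on \emph{every} equivalence class and equal to $1$ on $\widetilde{x}$, not merely those equal to $1$ on $\widetilde{x}$, though only the inclusion into $\{f:f(x)=1\}$ is actually used.
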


Define a notation for non-negative numbers. For any finite sequence \(a_1, a_2, \ldots, a_n \geq 0\), let $[a_1, a_2, \cdots, a_n] = (\sum_{i=1}^n \frac{1}{a_i})^{-1}.$  
For an infinite non-negative sequence \(\{a_i\}\), define  
$[a_1, a_2, \dots] = (\sum_{i=1}^\infty \frac{1}{a_i})^{-1},$ which directly represents the total conductance of \(\{c_i\}\) connected in series.

For a half-line electric network defined on a countable number of vertex, which can be regarded as being on \(\mathbf{N}\), we can provide an estimation of the effective conductance $C_{0\leftrightarrow \Delta}$ in \((I_{\Delta}, E_{\Delta}, C_{\Delta})\). The corresponding electric network \((I_{\Delta}, E_{\Delta}, C_{\Delta})\) can be considered as shown in the following figure. We denote $c_{i,i+1}$ by $a_i$ and denote $c_{i\Delta}$ by $b_i$ for $i\ge 0$. Let $A_n=[a_0,a_1,\cdots,a_n]$ and $B_n=\sum_{i=0}^n b_i$. 
\begin{figure}[htbp]
    \centering
    \includegraphics[width=0.4\linewidth]{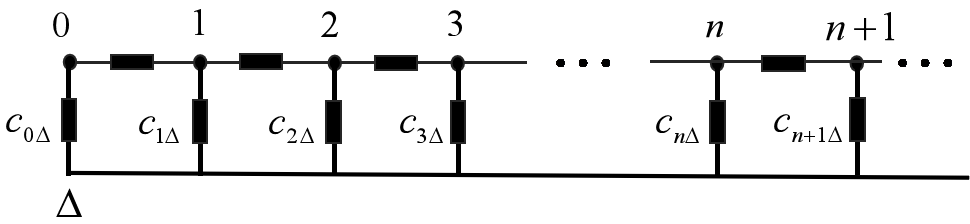}
    \caption{Electric network on $\nat$}
    \label{fig:1}
\end{figure}

\begin{thm} \label{thm:4.8}
For any $n\geq 0$,
$$ [A_n,B_n]\leq  [A_{n-1},B_n] \leq C_{0\leftrightarrow \Delta}\le A_n+B_n\leq A_{n-1}+B_{n}.$$
\end{thm}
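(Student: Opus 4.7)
\emph{Plan.} The plan is to prove the outer bounds $[A_{n-1},B_n]\le C_{0\leftrightarrow\Delta}\le A_n+B_n$ by variational principles on $(I_\Delta,E_\Delta,C_\Delta)$, since the inner inequalities $[A_n,B_n]\le[A_{n-1},B_n]$ and $A_n+B_n\le A_{n-1}+B_n$ are immediate from $A_n\le A_{n-1}$, which itself follows from $A_n^{-1}=A_{n-1}^{-1}+a_n^{-1}$. I would prove the upper bound by plugging an explicit test voltage into Dirichlet's principle (4.2), and the lower bound by plugging an explicit unit flow into its dual, Thomson's principle.

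\emph{Upper bound.} For $C_{0\leftrightarrow\Delta}\le A_n+B_n$ I would use the test voltage
$$f(i)=\begin{cases}1-A_n\sum_{j=0}^{i-1}a_j^{-1},&0\le i\le n+1,\\ 0,&i>n+1,\end{cases}$$
which satisfies $f(0)=1$, $f(n+1)=0$, $0\le f(i)\le 1$, and $f(i+1)-f(i)=-A_n/a_i$ on $\{0,\dots,n\}$. The chain contribution to $\ene(f)$ is then exactly $A_n^2\sum_{i=0}^{n}a_i^{-1}=A_n$, and the grounding contribution is at most $\sum_{i=0}^{n}b_i=B_n$ because $f(i)^2\le 1$ on $\{0,\dots,n\}$ and vanishes beyond. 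Substituting into (4.2) yields $C_{0\leftrightarrow\Delta}\le A_n+B_n$.

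\emph{Lower bound.} For $C_{0\leftrightarrow\Delta}\ge[A_{n-1},B_n]$, equivalently $R_{0\leftrightarrow\Delta}\le A_{n-1}^{-1}+B_n^{-1}$, I would invoke Thomson's principle: for any unit flow $\theta$ from $0$ to $\Delta$ one has $R_{0\leftrightarrow\Delta}\le\sum_{e}\theta(e)^{2}/c_{e}$. The natural candidate bleeds a fraction $b_i/B_n$ of the current out to $\Delta$ at each node $i\in\{0,\dots,n\}$ and pushes the remainder along the chain; Kirchhoff then forces $\theta(i\to i+1)=B_n^{-1}\sum_{j=i+1}^{n}b_j$ for $0\le i\le n-1$, with all currents beyond node $n$ vanishing. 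The grounding energy is exactly $\sum_{i=0}^{n}(b_i/B_n)^2/b_i=B_n^{-1}$, and the chain energy is at most $A_{n-1}^{-1}$ because $\sum_{j>i}b_j\le B_n$. Adding the two contributions gives the claimed bound.

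\emph{Main obstacle.} The one point that is not quite straightforward is that Thomson's principle is never stated explicitly in the paper; only Dirichlet (4.2) and the short/open-circuit monotonicity (Theorem 4.7) appear. If one prefers to avoid Thomson by name, the lower bound can instead be extracted from the harmonic voltage $f^*$ achieving the infimum in (4.2): Cauchy--Schwarz on the chain yields $(1-f^*(n))^2\le A_{n-1}^{-1}\sum_{i=0}^{n-1}a_i(f^*(i+1)-f^*(i))^2$, and the monotonicity $f^*(i)\ge f^*(n)$ for $0\le i\le n$ gives $B_n\,f^*(n)^2\le\sum_{i=0}^{n}b_i f^*(i)^2$; optimizing $A_{n-1}(1-t)^2+B_n t^2$ over $t=f^*(n)\in[0,1]$ reproduces $[A_{n-1},B_n]$. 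The monotonicity itself is the only delicate point of this alternative route, but it follows quickly from the probabilistic representation $f^*(i)=P^i(\text{hit }0\text{ before }\Delta)$ together with the observation that any excursion from $i+1$ to $0$ must cross $i$.
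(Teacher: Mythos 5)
Your proposal is correct, and it reaches the two outer bounds by a genuinely different route than the paper. For the upper bound the paper first short-circuits all vertices $\{n+1,n+2,\dots\}$ into $\Delta$ (invoking Theorem \ref{thm:4.7}) and then minimizes the resulting finite Dirichlet form; your explicit test voltage $f(i)=1-A_n\sum_{j<i}a_j^{-1}$ plugged straight into (\ref{equ:4.2}) gives $\ene(f)\le A_n+B_n$ with no circuit surgery at all, so this half is a mild streamlining of the same idea (your $f$ is exactly the minimizer of the chain part that the paper's reduced problem produces). The real divergence is the lower bound: the paper open-circuits the edges beyond $n$, applies the open-circuit rule, and then evaluates the reduced network using the monotonicity of the harmonic voltage ($u_x=P^x(T_0<T_\Delta)$) and a series reduction; you instead exhibit a finitely supported unit flow and invoke Thomson's principle. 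That flow satisfies Kirchhoff's law as you claim, its energy is at most $A_{n-1}^{-1}+B_n^{-1}$, and since the flow is finitely supported the needed inequality $C_{0\leftrightarrow\Delta}\ge 1/\ene_R(\theta)$ follows from a one-line Cauchy--Schwarz pairing of $\theta$ against any $f$ with $f(0)=1$, $f(\Delta)=0$ in (\ref{equ:4.2}) -- no free/wired subtlety enters -- so the only thing to add is that short justification, which you correctly flag as absent from the paper. Your fallback argument (Cauchy--Schwarz along the chain applied to the minimizer $f^*$, plus $f^*(i)\ge f^*(n)$ for $i\le n$, then optimizing $A_{n-1}(1-t)^2+B_nt^2$) is also sound and is in fact closer in spirit to the paper's proof, which relies on exactly the same monotonicity fact justified the same probabilistic way; note you could even dispense with monotonicity there by taking $t=\min_{0\le i\le n}f^*(i)$, since the quadratic is minimized over all real $t$ anyway and Cauchy--Schwarz up to the minimizing index only improves the constant $A_{n-1}$. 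The trade-off: your route needs no Rayleigh monotonicity and no network reductions, at the price of importing (or proving in one line) Thomson's principle; the paper stays entirely within (\ref{equ:4.2}) and Theorem \ref{thm:4.7} at the price of the short/open-circuit bookkeeping. One shared small caveat: for $n=0$ the quantity $A_{n-1}$ is an empty series conductance and should be read as $+\infty$ (or the statement taken for $n\ge 1$), a point neither you nor the paper addresses explicitly.
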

\begin{proof}
    Firstly, short the vertex $\{\Delta,n+1,\cdots\}$ together by $\Delta$, and use $C_1$ to denote the effective conductance
    between $0$ and $\Delta$ in this new network as shown in the following figure.
    \begin{figure}[htbp]
    \centering
    \includegraphics[width=0.4\linewidth]{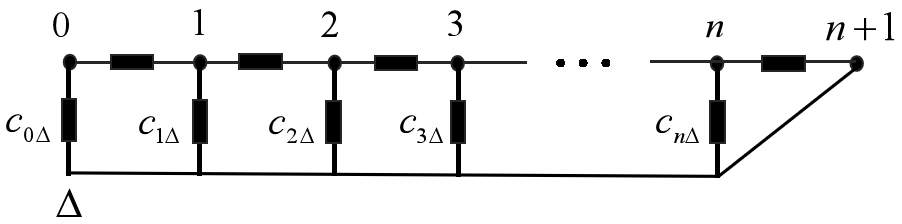}
    \caption{Short-circuiting electric network on $\nat$}
    \label{fig:2}
    \end{figure}
    
    By Theorem \ref{thm:4.7}, we have $C_1\ge C_{0\leftrightarrow \Delta}$.
    In this new network, put a unit voltage $u_i$, as a brief notation for $u(i)$,  from $0$ to $\Delta$ such that $u_0=1$ and $u_\Delta=0$. Then the voltage at $x$ is $u_x=P^x(T_0<T_\Delta)$. Clearly, $1=u_0>u_1>\cdots >u_{n+1}=\cdots=u_\Delta=0$.
    Thus by (\ref{equ:4.2}), we have 
    \begin{align*}
    C_1&=\inf\left\{ \sum_{i=0}^{n} \left[a_i (u_i-u_{i+1})^2+  b_i u^2_i\right]: 1=u_0>\cdots >u_{n+1}=\cdots=u_\Delta=0\right\}\\
    &\le B_n+ \inf\left\{\sum_{i=0}^{n} a_i (u_i-u_{i+1})^2:1=u_0>\cdots >u_{n+1}=0\right \}\\
    &=B_n+\inf\bigg\{\sum_{i=0}^{n} a_i (u_i-u_{i+1})^2:u_0=1,u_{n+1}=0\bigg\}\\
    &=A_n+B_n.
    \end{align*}
    According to the definition of \( A_n \), $A_n+B_n\leq A_{n-1}+B_{n}$ holds naturally.  
    Intuitively speaking, the validity of this inequality lies in comparing the effective conductance in Figure \ref{fig:2} with that in Figure \ref{fig:3}. This can be seen from the inequality in the second step.
    \begin{figure}[htbp]
    \centering
    \includegraphics[width=0.5\linewidth]{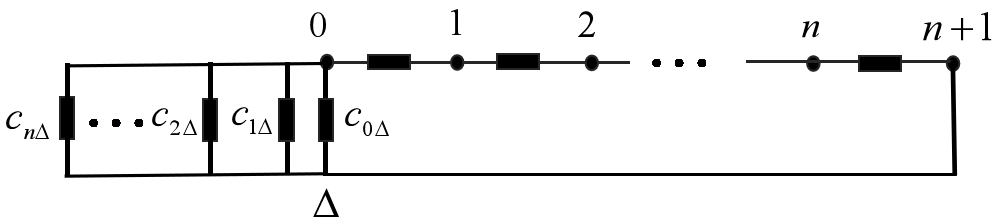}
    \caption{Electric network for upper bound}
    \label{fig:3}
    \end{figure}

    Secondly, cut the edges $\{(i,i-1),(i,\Delta):i\ge n+1\}$, and use $C_2$ to denote the effective conductance between $0$ and $\Delta$ in this new network as shown in Figure \ref{fig:4}. 
    \begin{figure}[htbp]
    \centering
    \includegraphics[width=0.38\linewidth]{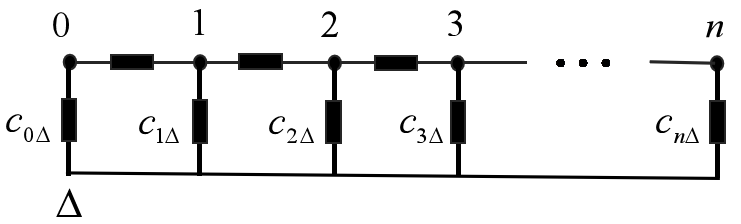}
    \caption{Open-circuiting electric network on $\nat$}
    \label{fig:4}
    \end{figure}
   
    By Theorem \ref{thm:4.7}, we have $C_2\le C_{0\leftrightarrow \Delta}$.
    Similarly,
    \begin{align*}
    C_2&=\inf\left\{ \sum_{i=0}^{n-1} \left[a_i (u_i-u_{i+1})^2+  b_i u^2_i\right]+b_n u^2_n: 1=u_0>\cdots >u_n>u_\Delta=0\right\}\\
    &\ge \inf\left\{ B_nu^2_n+ \sum_{i=0}^{n-1} a_i (u_i-u_{i+1})^2:1=u_0>\cdots >u_n\right \}\\
    &= \inf\left\{ B_n(u_n-u_\Delta)^2+ \sum_{i=0}^{n-1} a_i (u_i-u_{i+1})^2:u_0=1,u_\Delta=0 \right\}\\
    &=[A_{n-1},B_n]\geq [A_n,B_n].\\
    \end{align*}
    Intuitively speaking, this inequality lies in comparing the effective conductance in Figure \ref{fig:4} with that in Figure \ref{fig:5}. 
    \begin{figure}[htbp]
    \centering
    \includegraphics[width=0.4\linewidth]{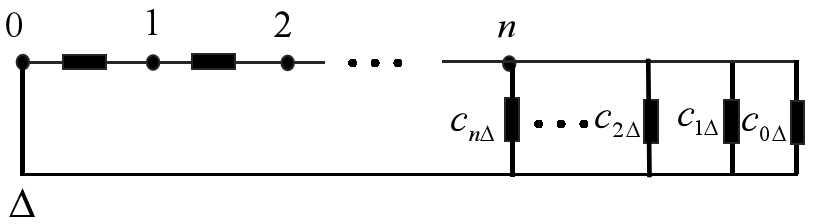}
    \caption{Electric network for lower bound}
    \label{fig:5}
    \end{figure}

    That completes the proof.
\end{proof}

Using similar techniques, for an electric network on a straight line, which can be regarded as being on \(\mathbf{Z}\), we can also provide an estimation of the effective conductance $C_{0\leftrightarrow \Delta}$ in \((I_{\Delta}, E_{\Delta}, C_{\Delta})\). The corresponding electric network \((I_{\Delta}, E_{\Delta}, C_{\Delta})\) can be considered as shown in the following figure. Denote $c_{i,i+1}$ by $a_i$ and denote $c_{i\Delta}$ by $b_i$ for $i\in \zz$. Let $A_n^+=[a_0,a_1,\cdots,a_n],A_n^-=[a_{-1},a_{-2},\cdots,a_{-n}]$ and $B_n^+=\sum_{i=0}^n b_i, B_n^-=\sum_{i=1}^n b_{-i}$. 
\begin{figure}[htbp]
    \centering
    \includegraphics[width=0.5\linewidth]{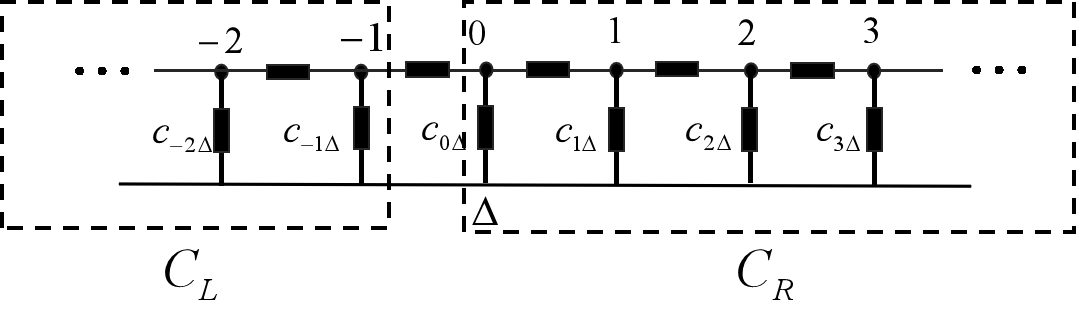}
    \caption{Electric network on $\zz$}
    \label{fig:6}
\end{figure}

As shown in the figure above, we denote the effective conductance of the electric network corresponding to the natural number part as $C_R$, and the effective conductance corresponding to the electric network corresponding to the negative integer part as $C_L$.  

\begin{thm} \label{thm:4.9}
For any $n,m\geq 0$, 
$$ [A_n^+,B_n^+]+[A_m^-,B_m^-]\leq C_{0\leftrightarrow \Delta}\le A_n^++A_m^-+B_n^++B_m^-.$$
\end{thm}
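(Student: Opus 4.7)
The plan is to reduce this bilateral estimate to the half-line estimate of Theorem \ref{thm:4.8} by decomposing the network in Figure \ref{fig:6} into its right and left halves, glued only at the source $0$ and the common sink $\Delta$. Concretely, I would let $N_R$ denote the sub-network on vertices $\{0,1,2,\ldots\}\cup\{\Delta\}$ carrying the conductances $a_i$ for $i\ge 0$ along the line and $b_i$ for $i\ge 0$ to $\Delta$, and let $N_L$ denote the sub-network on vertices $\{0,-1,-2,\ldots\}\cup\{\Delta\}$ carrying the remaining conductances $a_i$ for $i\le -1$ and $b_i$ for $i\le -1$. The original network is the edge-disjoint union $N_R\cup N_L$, and these two sub-networks meet only at the terminals $0$ and $\Delta$.

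The first step is the \emph{parallel law} identity
\[C_{0\leftrightarrow\Delta}=C_R+C_L,\]
which follows directly from Dirichlet's variational characterization (\ref{equ:4.2}): the quadratic energy of any function $f$ with $f(0)=1$ splits as a sum of an energy over the edges of $N_R$ and an energy over the edges of $N_L$, and inside each half only the value $f(0)=1$ is constrained, so the two infima decouple and are attained on each side by the corresponding unit voltage from $0$ to $\Delta$.

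The second step is to apply Theorem \ref{thm:4.8} to each half. Both $N_R$ and $N_L$ are half-line networks of exactly the form considered there (with $N_L$ obtained by the relabeling $-i\mapsto i$), so for any $n,m\ge 0$ we obtain
\[[A_n^+,B_n^+]\le C_R\le A_n^++B_n^+, \qquad [A_m^-,B_m^-]\le C_L\le A_m^-+B_m^-.\]
Summing these two chains of inequalities and using the parallel-law identity $C_{0\leftrightarrow\Delta}=C_R+C_L$ yields the claim.

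The only subtlety is justifying the parallel law in the infinite setting, but this is essentially built into the variational definition of effective conductance: the minimizer on the full network, restricted to either side, is admissible for the corresponding half-line problem, and conversely, gluing the two half-line minimizers along the common constraint $f(0)=1$, $f(\Delta)=0$ gives an admissible function on the whole network whose energy is the sum. Alternatively, one could skip the decomposition entirely and mimic the short-circuit/open-circuit constructions used in the proof of Theorem \ref{thm:4.8}, shorting $\{\Delta,n+1,n+2,\ldots\}\cup\{-m-1,-m-2,\ldots\}$ together with $\Delta$ for the upper bound, and cutting the analogous edges for the lower bound. That approach is conceptually the same but would require repeating essentially symmetric computations on each side of the origin, which is why I would prefer the parallel-law route.
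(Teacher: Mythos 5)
Your proposal is correct and follows essentially the same route as the paper: the paper likewise regards the network on $\mathbf{Z}$ as a parallel connection of the right and left half-line networks and applies Theorem \ref{thm:4.8} to each side, writing the decomposition as $C_{0\leftrightarrow\Delta}=C_R+[a_{-1},C_L]$, which is just your $C_L$ with the edge $a_{-1}$ absorbed into the left half. Your explicit justification of the parallel law via the splitting of the Dirichlet energy (only $f(0)=1$ being shared) is a correct fleshing-out of what the paper leaves implicit.
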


\begin{proof}
We regard the electric network on $\zz$ as the parallel connection of the left and right two electric networks. By Theorem 4.8, we can obtain \[[A_n^+,B_n^+]+[A_m^-,B_m^-]\le C_{0\leftrightarrow \Delta}=C_R+[a_{-1},C_L]\le A_n^++A_m^-+B_n^++B_m^-.\] 

\end{proof}
These two important estimations will help us derive many significant results on birth-death process in Section 5. 

\section{The first returning speed of birth-death process}
    The birth-death process, as a fundamental model in many fields, is a very important class of CTMC. The electric network corresponding to birth-death process can be considered to be arranged on a straight line. In this section, we use the results given in Section 4 to present a general method for estimating the first returning speed of birth-death process. Based on this, we present the estimation results of some widely concerned processes.

    For a one-sided conservative birth-death process $\{X_t,t\geq 0\}$ on $\nat$, its density matrix $Q=(q_{ij})(i,j\in \nat)$ has the form
    \[Q=\begin{bmatrix}
        -\beta_0 & \beta_0 & 0 & \cdots &0 &0 &0 &\cdots  \\
        \alpha_1 & -(\alpha_1+\beta_1) &  \beta_1 & \cdots &0 &0 &0 &\cdots  \\
        \vdots & \vdots &\vdots &\, &\vdots &\vdots &\vdots &\,\\
        0 &0 &0 &\cdots &\alpha_n  & -(\alpha_n+\beta_n)& \beta_n &\cdots \\
        \vdots & \vdots &\vdots &\, &\vdots &\vdots &\vdots &\,\\
    \end{bmatrix},\]
    where $\beta_i>0(i\ge 0)$, $\alpha_i>0(i\ge 1)$. The invariant measure $m_i$ has the following representation
    \begin{align}\label{equ:5.1}
        m_0=1,\,m_n=\dfrac{\beta_0\beta_1\beta_2\cdots \beta_{n-1}}{\alpha_1\alpha_2\cdots \alpha_{n-1}\alpha_n}\mbox{\,\,for\,\,}n \ge 1,
    \end{align} 
    similar to the speed measure in one-dimensional diffusions. Let $M(n)=\sum_{i=0}^n m_i.$
    Let $s(n)$ denote the scale function defined by
    \begin{align}\label{equ:5.2}
    s(0)=0,\,s(1)=\beta_0^{-1},\,s(n)=\beta_0^{-1}+\sum_{i=1}^{n-1} \dfrac{\alpha_1\alpha_2\cdots\alpha_i}{\beta_0\beta_1\cdots\beta_i}\mbox{\,\,for\,\,}n \ge 2.
    \end{align}
    According to the definitions in Section 4, we can obtain 
    \begin{align}\label{equ:5.3}
        c_{n,n+1}=m_n\beta_n=\begin{cases}
        \beta_0, & n=0,\\
        \dfrac{\beta_0\beta_1\beta_2\cdots \beta_{n}}{\alpha_1\alpha_2\cdots \alpha_{n-1}\alpha_n}, & n \ge 1,\end{cases}
    \end{align}
    and $c_{n\Delta}=\lambda m_n.$
    
    For a null recurrent birth-death process, according to Section 6.8 in \cite{Wang1992Birth}, since the first flying time is equal to positive infinity with probability 1, the process is uniquely determined by the Q-matrix $Q=(q_{ij})$. The first flying time here is sometimes also referred to as the first infinity time or the first moment of bursting out. The first flying time is defined almost everywhere, and can be referred to Section 2.3 in \cite{Wang1992Birth}. The necessary and sufficient condition for null recurrence can also be seen in Section 6.8 in \cite{Wang1992Birth}, and it will not be elaborated here.  

    By Theorem \ref{thm:4.8}, the effective conductance $C_{0\leftrightarrow \Delta}$ can be estimated for a null-recurrent birth-death process. 
    We still denote $c_{i,i+1}$ by $a_i$ and denote $c_{i\Delta}$ by $b_i$ for $i\ge 0$. By the definition in Section 4, we have \[A_0=a_0=\beta_0,\,A_n=\dfrac{1}{s(n+1)},\] and \[B_0=b_0=\lambda m_0,\,B_n=\lambda M(n).\] Since what we need to estimate is the situation when \(\lambda\) is small, we can always assume $A_1> B_2$. For the electric network corresponding to a recurrent process, since the conductance $A_n$ should tend to 0, we have $\lim_{n\to\infty}A_n<\lim_{n\to\infty} B_n$. By Theorem \ref{thm:4.8}, it can be seen that the closer \(A_{n-1}\) is to \(B_n\), the better the estimation of \(C_{0\leftrightarrow \Delta}(\lambda)\) will be. Using this method, we present the estimation of some important models.
 
    In the subsequent proofs in this section, we briefly denote $E^0(\cdot)$ and $\tau_0$ by $E(\cdot)$ and $\tau$. The following theorem provides an estimation of first returning time for general null-recurrent birth-death processes. 

    \begin{thm}\label{thm:5.1}
        Assume that $\{X_t,t\geq 0\}$ is a null-recurrent birth-death process on $\nat$. Take $w(t)$ satisfied Condition (C). Let $F(n)=M(n)s(n)$ and \[F^{-1}(x):=\inf\{n\in\nat: F(n)\ge x\}.\]
        \begin{enumerate}[(1)]
        \item If \[\limsup_{n\rightarrow \infty} \dfrac{M(n)}{M(n-1)}<\infty,\]
        then $E(w(\tau))<\infty$ if and only if 
        \[\int^\infty |w''(t)|M(F^{-1}(t))dt<\infty.\]
            \item If \[\limsup_{n\rightarrow \infty} \dfrac{s(n)}{s(n-1)}<\infty,\]
          then $E(w(\tau))<\infty$ if and only if 
        \[\int^\infty \dfrac{t|w''(t)|}{s(F^{-1}(t))}dt<\infty.\]
        \end{enumerate}
    \end{thm}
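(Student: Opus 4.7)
\medskip

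The plan is to combine Theorem \ref{thm:4.4} with the sharp two-sided bounds of Theorem \ref{thm:4.8}, and then to calibrate the optimization index $n$ in those bounds against the level set index $n_t := F^{-1}(t)$. Concretely, Theorem \ref{thm:4.4} reduces the problem to showing
\[
\int^{\infty} t\,|w''(t)|\,C_{0\leftrightarrow \Delta}(1/t)\,dt < \infty,
\]
so it suffices to obtain, under each hypothesis, a two-sided asymptotic for $C_{0\leftrightarrow \Delta}(1/t)$. With $\lambda = 1/t$ the formulas of Section 5 give $A_n = 1/s(n+1)$ (and $A_0 = \beta_0$) together with $B_n = M(n)/t$, so Theorem \ref{thm:4.8} can be rewritten as
\[
\frac{M(n)}{F(n)+t}\;=\;[A_{n-1},B_n]\;\le\;C_{0\leftrightarrow \Delta}(1/t)\;\le\;A_n+B_n\;=\;\frac{1}{s(n+1)}+\frac{M(n)}{t},\qquad n\ge 1.
\]
Everything now turns on picking $n$ cleverly as a function of $t$; note the defining inequalities $F(n_t-1)<t\le F(n_t)$.

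For part (1), I would take $n=n_t$ in the upper bound. Because $t\le M(n_t)s(n_t)\le M(n_t)s(n_t+1)$, we get $1/s(n_t+1)\le M(n_t)/t$, hence $C_{0\leftrightarrow \Delta}(1/t)\le 2\,M(n_t)/t$. For the lower bound I would choose $n=n_t-1$, giving
\[
C_{0\leftrightarrow \Delta}(1/t)\;\ge\;\frac{M(n_t-1)}{F(n_t-1)+t}\;\ge\;\frac{M(n_t-1)}{2t}\;\ge\;\frac{M(n_t)}{2Ct},
\]
where the last step uses the case-(1) hypothesis $M(n_t)\le C\,M(n_t-1)$. Multiplying by $t$ gives $tC_{0\leftrightarrow\Delta}(1/t)\sim M(F^{-1}(t))$, and the equivalence now follows from the $\sim$-preservation of $\int^\infty$ recorded in Section 3.

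For part (2) I would reverse the roles. For the upper bound take $n=n_t-1$: since the case-(2) hypothesis gives $s(n_t)\le C' s(n_t-1)$, and the defining inequality yields $M(n_t-1)/t<1/s(n_t-1)\le C'/s(n_t)$, one obtains $C_{0\leftrightarrow\Delta}(1/t)\le (1+C')/s(n_t)$. For the lower bound take $n=n_t$: using $t/M(n_t)\le s(n_t)$, one has $C_{0\leftrightarrow\Delta}(1/t)\ge 1/(s(n_t)+t/M(n_t))\ge 1/(2s(n_t))$. Therefore $tC_{0\leftrightarrow\Delta}(1/t)\sim t/s(F^{-1}(t))$, again yielding the claimed integrability equivalence via Theorem \ref{thm:4.4}.

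The main obstacle is precisely the choice of $n$ in Theorem \ref{thm:4.8}: the naive choice $n=n_t$ in both inequalities leaves a gap of factor $M(n_t)s(n_t)/t$, which can be unbounded without additional structure. The asymmetric selections above (downshift by one in the upper bound for case (2) and in the lower bound for case (1)) are exactly what allows the one-sided regularity hypothesis on $M$ or $s$ to close this gap, and are what I expect to require the most care in the writeup. A minor technical point to handle is that the estimates need only hold for $t$ large, which matches the $\int^\infty$ convention of the $\sim$-relation; small $n_t$ contributes a finite constant and can be absorbed.
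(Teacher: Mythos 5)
Your proposal is correct and follows essentially the same route as the paper: reduce via Theorem \ref{thm:4.4} to estimating $C_{0\leftrightarrow\Delta}(1/t)$, then apply Theorem \ref{thm:4.8} at the indices $F^{-1}(t)$ and $F^{-1}(t)-1$ (the paper defines the same index as $N=\inf\{n:A_{n-1}\le B_n\}$ and identifies it with $F^{-1}(\lambda^{-1})$), closing the two-sided gap with the ratio hypotheses on $M$ or $s$ exactly as in the paper's display (\ref{eq5.4}).
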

    
    \begin{proof}
        Take $N=\inf\{n\geq 0:A_{n-1}\le B_n\}$. Since $A_1>B_2$, $A_n\downarrow 0$ and $B_n\uparrow \infty$, we have $3\le N<\infty$ and  
        \begin{align*}
            N &= \inf\{n\geq 0:\dfrac{1}{s(n)}\le \lambda M(n)\} \\
            &=\inf\{n\geq 0: M(n)s(n)\geq \dfrac{1}{\lambda}\}\\
            &=F^{-1}(\lambda^{-1}).
        \end{align*}
        By Theorem \ref{thm:4.8}, we have 
        \[\dfrac{A_{N-1}}{2}\leq [A_{N-1},B_N] \leq C_{0\leftrightarrow \Delta}(\lambda)\leq A_{N-1}+B_{N}\leq 2B_N,\]
        \[\dfrac{B_{N-1}}{2}\leq [A_{N-2},B_{N-1}] \leq C_{0\leftrightarrow \Delta}(\lambda)\leq A_{N-2}+B_{N-1}\leq 2A_{N-2}.\]
        It follows that 
        \begin{align}\label{eq5.4}
            \dfrac{B_{N-1}}{2}\le \dfrac{1}{2}\max(A_{N-1},B_{N-1}) \le C_{0\leftrightarrow \Delta}(\lambda)\leq 2\min\left( B_N,A_{N-2} \right)\leq 2B_N.
        \end{align}
        If \[\limsup_{n\rightarrow \infty} \dfrac{M(n)}{M(n-1)}<\infty,\]
        then there is a constant $k>0$ such that $\dfrac{M(n)}{M(n-1)}\le k$ for all $n\ge 1.$
        Thus, we have 
        \[ \dfrac{1}{2}\le \dfrac{C_{0\leftrightarrow \Delta}(\lambda)}{\lambda M(N-1)}=\dfrac{C_{0\leftrightarrow \Delta}(\lambda)}{B_{N-1}}\leq 2\dfrac{B_N}{B_{N-1}}=2\dfrac{M(N)}{M(N-1)}\le 2k.\]
        Therefore, take $\lambda=1/t$ and $N=F^{-1}(\lambda^{-1})$, by Theorem \ref{thm:4.4},
         \[E(w(\tau))<\infty\Leftrightarrow \int^\infty |w''(t)|M(F^{-1}(t))dt<\infty.\]
        The second statement in the theorem can also be deduced analogously from (\ref{eq5.4}).
    \end{proof}

    Here, we present an example of a null-recurrent birth-death process.
    \begin{ex}
    Consider the birth-death process $\{X_t,t\geq 0\}$ on $\nat$ with rates 
    \[\alpha_n=\beta_n=(n+1)^\gamma,\,\,\gamma \le 1.\]
    
    From (\ref{equ:5.1}) and (\ref{equ:5.2}), we have $s(n)=n$ and 
    \[m_0=1,\quad m_n=\dfrac{1}{(n+1)^{\gamma}},\,n\ge 1.\]
    By Theorem 3.5 in \cite{li2024ray}, $\gamma \le 1$ assures us that $\{X_t,t\geq 0\}$ is unique, honest and null-recurrent.  
    \begin{enumerate}[(1)]
        \item If $\gamma<1$, then $F(N)\sim N^{2-\gamma}$ and $F^{-1}(t)\sim t^\frac{1}{2-\gamma}.$ Let $w(t)=t^a(0<a<1)$, then by Theorem \ref{thm:5.1},  
        \[\int^\infty \dfrac{t|w''(t)|}{s(F^{-1}(t))}dt\sim \int^\infty t^{a-1+\frac{1}{\gamma-2}} dt.\]
        Hence $E(\tau^a)<\infty$ if and only if $a<\dfrac{1}{2-\gamma}$.
    
        \item If $\gamma=1$, then $F(N)\sim N\log N.$ By the property of Lambert W-function, $F^{-1}(t)\sim \dfrac{t}{\log t}.$ Let   $w(t)=t^a(0<a<1)$, then by Theorem \ref{thm:5.1}, 
        \[\int^\infty \dfrac{t|w''(t)|}{s(F^{-1}(t))}dt\sim \int^\infty \dfrac{\log t}{t^{2-a}} dt.\]
        Hence $E(\tau^a)<\infty$ always holds for $0<a<1$. 

        If we take $w(t)=\dfrac{t}{\log^b t}(b>0)$, then we have  \[\int^\infty \dfrac{t|w''(t)|}{s(F^{-1}(t))}dt\sim \int^\infty \dfrac{1}{t\log^b t } dt.\] Hence for $b>0$, $E(w(\tau))<\infty$ always holds for $b>1$. 
    \end{enumerate}
    \end{ex}

    \subsection{Linear growth model}
    In this section, we would discuss the critical case$: \gamma=1$, which is called linear growth model. The linear growth model\cite{karlin1958linear} is an important birth-death process and has spawned many other models\cite{di2016multispecies}\cite{kapodistria2016linear}. It is a model that has attracted widespread attention in biological and ecological systems and many other fields\cite{crawford2012transition}. In the literature, some results allow us to obtain the transient behaviors by using spectral method\cite{karlin1958linear} or the characteristic equation method\cite{zheng2004transient}. Using the route mentioned above, we conduct a more detailed characterization of the null recurrence of this process.

    Consider the birth-death process $\{X_t,t\geq 0\}$ on $\nat$ with rates 
    \[\alpha_n=kn+r, \beta_n=mn+s,\]
    where $s>0$, $r\ge 0$, $k>0$ and $m>0$ are constants denoting the immigration, emigration, death and birth rate per individual, respectively. We call such a birth-death process a {\bf $(k,r,m,s)$-linear growth model}. For the positive recurrence and recurrence of the process, we present a different approach.
    
    \begin{thm}
    Consider a $(k,r,m,s)$-linear growth model $\{X_t,t\ge 0\}$.
    \begin{enumerate}[(1)]
        \item The process is recurrent if and only if $k>m$ or $s-r\le k=m$. 
        \item The process is null recurrent if and only if $0\leq s-r\le k=m$.
        \item The process is transient if and only if $k<m$ or $s-r> k=m$.
    \end{enumerate}
    \end{thm}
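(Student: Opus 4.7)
The idea is that both the recurrence dichotomy and the null‑versus‑positive dichotomy for a birth–death process reduce to convergence of two explicit series: by Section 6.8 of \cite{Wang1992Birth} together with Theorem~\ref{thm:2.2}, the chain is recurrent iff the scale function defined in (\ref{equ:5.2}) satisfies $s(\infty)=\infty$, and a recurrent chain is null recurrent iff the invariant measure in (\ref{equ:5.1}) has infinite total mass $\sum_{n}m_n=\infty$. Both are governed by the product
\[
\Pi_n:=\prod_{i=1}^{n}\frac{\alpha_i}{\beta_i}=\prod_{i=1}^{n}\frac{ki+r}{mi+s},
\]
so everything boils down to the asymptotic behaviour of $\Pi_n$ as $n\to\infty$ in the three regimes $k>m$, $k<m$, and $k=m$.

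The non‑critical cases are immediate. When $k\neq m$ the general term $\log\bigl((ki+r)/(mi+s)\bigr)$ tends to $\log(k/m)\neq 0$, so $\Pi_n$ grows or decays geometrically like $(k/m)^{n}$ up to a polynomial factor. For $k>m$ this forces $s(\infty)=\infty$ while $m_n$ is summable, giving positive recurrence; for $k<m$ the series in (\ref{equ:5.2}) is a convergent geometric‑type series, so $s(\infty)<\infty$, which is transience.

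The delicate regime is $k=m$. There
\[
\log\frac{ki+r}{ki+s}=\log\!\Bigl(1+\tfrac{r-s}{ki+s}\Bigr)=\frac{r-s}{ki}+O(i^{-2}),
\]
so summation and exponentiation give $\Pi_n\sim C\,n^{(r-s)/k}$ for some $C>0$. Substituting into (\ref{equ:5.2}) shows that $s(n)\asymp\sum_{i\le n}i^{(r-s)/k}$, which diverges (recurrence) iff $(r-s)/k\ge -1$, i.e.\ iff $s-r\le k$. In this recurrent sub‑case, (\ref{equ:5.1}) together with $\alpha_n\sim kn$ yields $m_n\sim C'\,n^{(s-r)/k-1}$, so $\sum_{n}m_n=\infty$ (null recurrence) iff $s-r\ge 0$. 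Putting the three regimes together produces exactly the trichotomy claimed in the theorem.

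The main obstacle is this critical case $k=m$: the exponent $(r-s)/k$ sits precisely on the boundary of the $p$‑series, so crude bounds on $\Pi_n$ are not enough and one must retain the $O(i^{-2})$ remainder in order to convert the harmonic estimate $\sum_{i\le n}(r-s)/(ki)\sim \bigl((r-s)/k\bigr)\log n$ into the sharp polynomial asymptotic $\Pi_n\sim C n^{(r-s)/k}$. Away from this critical line the arguments are routine geometric comparisons, and assembling the three cases to read off transience, null recurrence, and recurrence is a bookkeeping exercise.
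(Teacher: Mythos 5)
Your proposal is correct, and it rests on the same reduction the paper uses: recurrence is decided by divergence of the scale-type series (the paper phrases this as infinite effective resistance $\sum_i a_i^{-1}=\infty$, which is exactly $s(\infty)=\infty$ since $a_n^{-1}$ are the increments of $s$ in (\ref{equ:5.2})), and null versus positive recurrence by divergence of $\sum_n m_n$ via Theorem \ref{thm:2.2}. Where you differ is in how the convergence of these two series is decided. The paper applies D'Alembert's ratio test for $k\neq m$ and Raabe's test for $k=m$ directly to the ratios $a_n^{-1}/a_{n+1}^{-1}$ and $m_n/m_{n+1}$, never computing asymptotics of the products; you instead expand $\log\bigl((ki+r)/(ki+s)\bigr)=(r-s)/(ki)+O(i^{-2})$ and sum to get the sharp polynomial behaviour $\Pi_n\sim C\,n^{(r-s)/k}$, hence $s(n)\asymp\sum_{i\le n}i^{(r-s)/k}$ and $m_n\sim C'\,n^{(s-r)/k-1}$. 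Your route is slightly longer but buys two things: it handles the boundary lines $s-r=k$ and $s-r=0$ transparently (Raabe's test with limit exactly $1$ is formally inconclusive, which is why the paper must invoke the refined version of the test it cites), and it yields the explicit asymptotics of $s(n)$ and $M(n)$ that the paper has to recompute anyway in the proof of Theorem \ref{thm:5.4}. The paper's ratio-test route is shorter and avoids the asymptotic bookkeeping, and both correctly reduce part (3) to the complement of parts (1)--(2).
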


    \begin{proof}
    By (\ref{equ:5.1}), we have   
    \[a_0=s, \,a_n=s\prod^{n}_{i=1}\dfrac{mi+s}{ki+r}\,(n\ge1).\]
    In the electric networks, the recurrence is equivalent to the resistance $R_{0\leftrightarrow \infty}=\sum _{i=0}^\infty a_i^{-1}$ being infinite.
    Since \[\lim_{n\rightarrow \infty} \dfrac{a^{-1}_{n+1}}{a^{-1}_n}=\lim_{n\rightarrow \infty}\dfrac{k(n+1)+r}{m(n+1)+s}=\dfrac{k}{m},\] according to the D'Alembert's Ratio Test, the series $\sum _{i=0}^\infty a_i^{-1}$ diverges when $k>m$, and converges when $k<m$.

    When $k=m$, we have 
    \[n\left(\dfrac{a_n^{-1}}{a_{n+1}^{-1}} -1\right)=\dfrac{(s-r)n}{mn+m+r}.\]
    According to the Raabe's Ratio Test\cite{huynh2021secondraabestestseries}, the series $\sum _{i=0}^\infty a_i^{-1}$ diverges when $s-r\le m$, and converges when $s-r>m$. That is to say, $\{X_t,t\ge 0\}$ is recurrent if and only if $k>m$ or $s-r\le k=m$.

    By (\ref{equ:5.1}), 
    \[\lim_{n\rightarrow \infty} \dfrac{m_{n+1}}{m_n}=\lim_{n\rightarrow \infty} \dfrac{mn+s}{k(n+1)+r}=\dfrac{m}{k}.\]
    Then for $m<k$, $\sum_{i=0}^\infty m_i$ converges. When $k=m$, we have 
    \[n\left(\dfrac{m_n}{m_{n+1}}-1\right)=\dfrac{(m+r-s)n}{mn+s}.\]
    In a similar way, by the Raabe's Ratio Test, $\sum_{i=0}^\infty m_i$ diverges if $0\leq s-r\le k=m$. 
    \end{proof}
    When \( r=0 \), the model degenerates into the one mentioned in Example 4.57 of \cite{chen2004markov}, and the result we obtain is a generalization. When the process is null recurrent, we provide a more delicate characterization.

    \begin{thm}\label{thm:5.4}
    Consider a null recurrent $(m,r,m,s)$-linear growth model $\{X_t,t\ge 0\}$ with $0\le s-r \le m$, 
    \begin{enumerate}[(1)]
        \item If $s-r=m$, $E(\log^\gamma\tau)<\infty$ if and only if $0<\gamma<1$.
        \item If $0\le s-r<m$, $E(\tau^a)<\infty$ if and only if $0<a<1-\dfrac{s-r}{m}$.
    \end{enumerate}
    \end{thm}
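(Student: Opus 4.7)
The plan is to apply Theorem \ref{thm:5.1} directly to the $(m,r,m,s)$-linear growth model. This requires three ingredients: (i) explicit asymptotics of $m_n$ and of the ratio $\alpha_1\cdots\alpha_i/(\beta_0\cdots\beta_i)$ via the Gamma function, (ii) the resulting asymptotics of $M(n)$, $s(n)$, and $F(n) = M(n)s(n)$, and (iii) the inversion $F^{-1}(t)$, together with verification of whichever bounded-ratio hypothesis of Theorem \ref{thm:5.1} is convenient.

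For case (1) ($s - r = m$), the algebra collapses cleanly: from $s = r + m$, we have $\beta_{i-1} = (i-1)m + s = im + r = \alpha_i$, so by (\ref{equ:5.1}) $m_n \equiv 1$ and hence $M(n) = n+1$. The same cancellation in (\ref{equ:5.2}) gives $\alpha_1\cdots\alpha_i/(\beta_0\cdots\beta_i) = (im+s)^{-1}$, so $s(n) \sim m^{-1}\log n$. Thus $F(n) \sim m^{-1} n\log n$, and a standard Lambert-$W$ inversion gives $F^{-1}(t) \sim mt/\log t$. Since $s(n)/s(n-1)\to 1$, Theorem \ref{thm:5.1}(2) with $w(t) = \log^\gamma t$ reduces the problem to the convergence of
\[
    \int^\infty \frac{t|w''(t)|}{s(F^{-1}(t))}\,dt \sim \int^\infty \frac{\log^{\gamma-2}t}{t}\,dt,
\]
which is finite iff $\gamma < 1$. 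Condition (C) for $\log^\gamma t$ is routine to verify.

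For case (2) ($0 \le s - r < m$), write $d = s - r$. Expressing the products via Gamma functions,
\[
    m_n = \frac{s}{m}\cdot\frac{\Gamma(n+s/m)\,\Gamma(1+r/m)}{\Gamma(n+1+r/m)\,\Gamma(1+s/m)},
\]
Stirling yields $m_n \sim C\,n^{d/m-1}$, and an analogous computation gives $\alpha_1\cdots\alpha_i/(\beta_0\cdots\beta_i) \sim C'\,i^{-d/m}$. Summing, for $d > 0$ one obtains $M(n) \sim C_1 n^{d/m}$, $s(n) \sim C_2 n^{(m-d)/m}$, so $F(n) \sim C_3 n$, $F^{-1}(t) \sim C_4 t$ and $s(F^{-1}(t)) \sim C_5 t^{(m-d)/m}$. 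For $d = 0$ one has $M(n) \sim C_1'\log n$, $s(n) \sim C_2' n$, so $F(n) \sim C n\log n$ and Lambert-$W$ again gives $F^{-1}(t) \sim t/\log t$, $s(F^{-1}(t)) \sim C\, t/\log t$. In both subcases, $s(n)/s(n-1) \to 1$, and Theorem \ref{thm:5.1}(2) with $w(t) = t^a$ reduces matters to $\int^\infty t^{a+d/m-2}\,dt$ (or $\int^\infty t^{a-2}\log t\,dt$ when $d = 0$); both conditions collapse to the single criterion $a < 1 - (s-r)/m$.

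The main obstacle is the careful asymptotic extraction: verifying the uniform Stirling estimate $\Gamma(n+a)/\Gamma(n+b)\sim n^{a-b}$ with the correct multiplicative constants, summing this to control $M(n)$ and $s(n)$, and ensuring that the borderline $d = 0$ limit of case (2) matches the $\gamma \to 1^-$ boundary of case (1) so that no regime is missed. Once these asymptotics and the Lambert-$W$ inversion are in place, the rest is a direct substitution into Theorem \ref{thm:5.1}.
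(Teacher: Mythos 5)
Your proposal is correct and follows essentially the same route as the paper: obtain the asymptotics of $M(n)$, $s(n)$ and $F(n)=M(n)s(n)$, invert $F$ (Lambert-$W$ in the logarithmic cases), and substitute into Theorem \ref{thm:5.1}. The only differences are cosmetic: you invoke part (2) of Theorem \ref{thm:5.1} (the $t|w''(t)|/s(F^{-1}(t))$ criterion) where the paper uses part (1), and you treat the boundary subcase $s=r$ separately (where $M(n)\sim\log n$), which is in fact slightly more careful than the paper's computation; the resulting criteria coincide.
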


    \begin{proof}
    When $s-r=m$, by (\ref{equ:5.1}) and (\ref{equ:5.2}), we have 
    \begin{align*}
        s(n)\sim \dfrac{1}{s}\log n,\,M(n)=n.
    \end{align*}
    Let $w(t)=\log ^\gamma t(\gamma>0)$. Based on the asymptotic estimation of the Lambert $W$-function, we can obtain $F^{-1}(t)\sim \dfrac{t}{\log t}.$  By Theorem \ref{thm:5.1}, 
    we have 
    \[\int^\infty |w''(t)|M(F^{-1}(t))dt\sim \int^{\infty}\dfrac{1}{t\log^{2-\gamma}t}dt.\]
    Therefore, for $\gamma>0$, $E(\log^\gamma\tau)<\infty$ if and only if $\gamma<1$.

    When $0\le s-r<m$, we have 
     \begin{align*}
        s(n)&\sim \dfrac{1}{s}\sum_{k=1}^n k^\frac{r-s}{m}\sim n^{\frac{r-s+m}{m}},
        M(n)\sim n^{\frac{s-r}{m}}.
    \end{align*}
    Take $w(t)=t^a(0<a<1)$. Similarly, by Theorem \ref{thm:5.1}, we have 
     \[\int^\infty |w''(t)|M(F^{-1}(t))dt\sim \int^{\infty} t^{a-1+\frac{s-r-m}{m}} dt.\] Therefore, for $a>0$, $E(\tau^a)<\infty$ if and only if $a<1-\dfrac{s-r}{m}$.
    \end{proof}

\subsection{Bilateral birth-death process}
    For a bilateral birth-death process $\{X_t,t\geq 0\}$ on $\zz$, its density matrix $Q=(q_{ij})(i,j\in \zz)$ has the form 
    \[\left\{ \begin{aligned}
    	q_{ij}&=0,\qquad|i-j|>1,\\
    	q_{i,i-1}&=\alpha_i>0,\\
    	q_{i,i+1}&=\beta_i>0,\\
    	q_i:=&-q_{ii}=\alpha_i+\beta_i<\infty.
    	\end{aligned}
    	\right.\]
    The invariant measure $m_i$ has the following representation
    \[\left\{
    \begin{aligned}
    m_n&=\dfrac{\alpha_{-1}\alpha_{-2}\cdots \alpha_{n+1}}{\beta_0\beta_{-1}\cdots \beta_{n+1}\beta_n},\,\,n < -1,\\
    m_{-1}&=\dfrac{1}{\beta_0\beta_{-1}},\,m_{0}=\dfrac{1}{\alpha_0\beta_0},\,m_{1}=\dfrac{1}{\alpha_0\alpha_{1}},\\
    m_n&=\dfrac{\beta_1\beta_2\cdots \beta_{n-1}}{\alpha_0\alpha_1\alpha_2\cdots \alpha_{n-1}\alpha_n},\,\,n >1.
    \end{aligned}\right.\]

    In the microscopic description of chemical reactions, there is an important bilateral birth-death process with alternating rates. In the study of chain molecular diffusion, Stockmayer\cite{stockmayer1971local} models a molecule as a freely-joined chain of two regularly alternating kinds of atoms. The two kinds of
    atoms have alternating jump rates, and these rates are reversed for even and odd  labeled beads. This model has also attracted the attentions\cite{di2012bilateral}. We consider a birth-death process $\{X_t;t \ge 0\}$ with state space $\zz$, and for $n\in \zz$ denote by \[\beta_{2n}=\alpha_{2n+1}=r,\beta_{2n+1}=\alpha_{2n}=s,\] its birth and death rates. We call such a process a bilateral birth-death process with alternating rates $(r,s)$.

    \begin{thm}[Bilateral birth-death process with alternating rates]
    Consider a bilateral birth-death process with alternating rates $(r,s)$ on $\zz$, we have $E(\tau^a)<\infty$ if and only if $0<a<\dfrac{1}{2}$.
    \end{thm}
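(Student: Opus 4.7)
The plan is to apply the electric-network machinery of Section~4 directly. First I will compute the invariant measure and the conductances of the network on $\zz$. Using $m_x q_{xy}=m_y q_{yx}$, i.e.\ $m_{x+1}/m_x=\beta_x/\alpha_{x+1}$, the alternating structure gives $\beta_{2n}/\alpha_{2n+1}=r/r=1$ and $\beta_{2n+1}/\alpha_{2n+2}=s/s=1$, so $m_x\equiv 1$ is invariant. Consequently the edge conductances $a_i=c_{i,i+1}=m_i\beta_i$ alternate between $r$ and $s$, and the grounding conductances are $b_i=c_{i\Delta}=\lambda m_i=\lambda$ for every $i\in\zz$.

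Next I will get sharp asymptotics for the quantities appearing in Theorem~\ref{thm:4.9}. Because $1/a_i$ averages $\tfrac12(1/r+1/s)=(r+s)/(2rs)$ over consecutive integers,
\[
A_n^+=\Big(\sum_{i=0}^n \tfrac{1}{a_i}\Big)^{-1}\sim\dfrac{2rs}{(r+s)(n+1)},\qquad B_n^+=(n+1)\lambda,
\]
and the same asymptotics hold for $A_m^-$ and $B_m^-$. Writing $\lambda=1/t$ and choosing $n=m=N(t):=\bigl\lfloor\sqrt{2rst/(r+s)}\bigr\rfloor$ balances $A^\pm_{N}\asymp B^\pm_{N}\asymp 1/\sqrt{t}$, so Theorem~\ref{thm:4.9} yields constants $0<c_1<c_2<\infty$ with
\[
c_1 t^{-1/2}\le C_{0\leftrightarrow\Delta}(1/t)\le c_2 t^{-1/2}
\]
for all sufficiently large $t$, i.e.\ $C_{0\leftrightarrow\Delta}(1/t)\sim t^{-1/2}$.

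Finally, I will plug this into Theorem~\ref{thm:4.4} with $w(t)=t^a$ ($0<a<1$), so that $|w''(t)|\sim t^{a-2}$, giving
\[
\int^{\infty} t|w''(t)|C_{0\leftrightarrow\Delta}(1/t)\,dt\sim \int^{\infty}t^{a-3/2}\,dt,
\]
which is finite if and only if $a<1/2$. Hence $E(\tau^a)<\infty\iff 0<a<1/2$.

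The only delicate point is justifying the two-sided estimate $C_{0\leftrightarrow\Delta}(1/t)\asymp t^{-1/2}$: one must verify that the heuristic choice $N(t)\asymp\sqrt{t}$ simultaneously matches the upper bound $A_n^+ +A_m^- +B_n^+ +B_m^-$ and the lower bound $[A_n^+,B_n^+]+[A_m^-,B_m^-]$ in Theorem~\ref{thm:4.9} up to multiplicative constants. This reduces to the elementary fact that a partial harmonic average of a $2$-periodic positive sequence is $\Theta(1/n)$, which is straightforward but must be stated carefully since the series $\sum 1/a_i$ does not telescope; once this bounded ratio $A_N^+/B_N^+$ is established, the matching upper and lower bounds follow immediately.
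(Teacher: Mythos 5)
Your proposal is correct and follows essentially the same route as the paper: constant invariant measure, hence $A_N^{\pm}\asymp 1/N$ and $B_N^{\pm}\asymp \lambda N$, the choice $N\asymp\sqrt{1/\lambda}$ in Theorem \ref{thm:4.9} giving $C_{0\leftrightarrow\Delta}(\lambda)\asymp\sqrt{\lambda}$, and then Theorem \ref{thm:4.4} with $w(t)=t^a$ yielding $\int^{\infty}t^{a-3/2}\,dt$ and the threshold $a<1/2$. The only cosmetic difference is the normalization of the invariant measure ($m_i\equiv 1$ versus the paper's $m_i\equiv 1/(rs)$), which is immaterial since it only rescales the conductances by a constant.
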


    \begin{proof}
        It is easy to obtain that the invariant measure $m_i=\dfrac{1}{rs}$ for $i\in \zz$. It follows that $A^+_{N}\sim A^-_{N}\sim \dfrac{1}{N},B_N^+(\lambda)\sim B_N^-(\lambda)\sim \lambda N.$ Take $N=\sqrt{\dfrac{1}{\lambda}}$, by Theorem \ref{thm:4.9}, we have $C_{0\leftrightarrow \Delta}(\lambda)\sim \sqrt{\lambda}.$ According to Theorem \ref{thm:4.4}, assume that $w(t)=t^a$ where $0<a<1$, we have 
        \[ \int^{\infty} t |w''(t)|C_{0\leftrightarrow\Delta}(1/t) dt\sim \int^{\infty} t^{a-3/2} dt.\]
        Hence for $0<a<1$, we have $E(\tau^a)<\infty$ if and only if $0<a<\dfrac{1}{2}$.
    \end{proof}

    At the end of this section, we present the construction of a classical problem. We know that there is no necessary connection between the positive recurrence of the CTMC and the positive recurrence of the corresponding embedded DTMC (Discrete Time Markov Chain). We give an example where the process is null recurrent but the embedded chain is positive recurrent, and give a characterization of the first returning speed.

    \begin{ex}\label{ex:5.5}
    Consider a bilateral birth-death process $\{X_t,t\geq 0\}$ with the state space $I=\zz$. For a positive parameter $\gamma>0$,     
    \[\left\{ \begin{aligned}
    	\alpha_i&=\dfrac{2}{3\gamma^i},\,\beta_i=\dfrac{1}{3\gamma^i},\,i>0,\\
    	\alpha_0&=\beta_0=\dfrac{1}{2},\\
    	\alpha_i&=\dfrac{1}{3\gamma^{-i}},\,\beta_i=\dfrac{2}{3\gamma^{-i}},\,i<0.\\
    	\end{aligned}
    \right.\]

    Denote the invariant measure of the embedded chain by $\pi(i)$, then $\pi(\cdot)$ satisfies  
    \[\left\{ \begin{aligned}
    	\dfrac{1}{2}\pi(0)&=\dfrac{2}{3}\pi(1)=\dfrac{2}{3}\pi(-1),\\
    	\dfrac{1}{3}\pi(i)&=\dfrac{2}{3}\pi(i+1),\quad i\ge 1,\\
    	\dfrac{1}{3}\pi(i)&=\dfrac{2}{3}\pi(i-1),\quad i\le -1.\\
    	\end{aligned}
    \right.\] By simple calculations, we know that its embedded chain has a stationary distribution
    \[\pi(i)=\dfrac{3}{2^{|i|+3}}\mbox{\,\,for\,\,} i\ne 0,\quad\,\pi(0)=\dfrac{1}{4},\]
    and the positive recurrence follows.

    For the invariant measure of $\{X_t,t\geq 0\}$, 
    \[m_n=\left\{
    \begin{aligned}
    &6\left(\frac{\gamma}{2}\right)^{|n|},&n \ne 0,\\
    &4, &n=0.\\
    \end{aligned}\right.\]
    That is to say, when \(\gamma \geq 2\), the process is null recurrent.
    \begin{enumerate}[(1)]
        \item If \(\gamma > 2\), $A_N^+\sim A_N^-\sim 2^{-N}$ and $B_N^+(\lambda)\sim B_N^-(\lambda)\sim \lambda (\gamma/2)^N$. Take $N=-\dfrac{\log \lambda}{\log \gamma}$, we have \[C_{0\leftrightarrow \Delta}(\lambda)\sim 2^{\log_\gamma \lambda}=\lambda^{\log_\gamma 2}.\]
        According to Theorem \ref{thm:4.4}, assume that $w(t)=t^a$ and $0<a<1$, 
        we have \[\int^{\infty} t |w''(t)|C_{0\leftrightarrow\Delta}(1/t) dt\sim \int^{\infty} t^{a-1-\log_\gamma2} dt.\]
        Hence, for $0<a<1$, $E(\tau^a)<\infty$ if and only if $0<a<\log_{\gamma}2$.
           
        \item If \(\gamma = 2\), $A_N^+\sim A_N^-\sim 2^{-N}$ and $B_N^+(\lambda)\sim B_N^-(\lambda)\sim \lambda N$. Take $N=\left[\dfrac{W(\frac{\log 2}{\lambda})}{\log 2}
        \right]\sim \log \dfrac{1}{\lambda}$, where $W(x)$ is Lambert W-function. Then we have 
        \[C_{0\leftrightarrow \Delta}(1/t)\sim \dfrac{\log t}{t}.\]
        Let $w(t)=t^a(0<a<1)$, according to Theorem \ref{thm:4.4},  
        we have  
        \[t |w''(t)|C_{0\leftrightarrow\Delta}(1/t)\sim \dfrac{\log t}{t^{2-a}}.\] 
         Hence $E(\tau^a)<\infty$ always holds for $0<a<1$.

        If we take $w(t)=\dfrac{t}{\log^b t}(b>0)$, then we have \[t |w''(t)|C_{0\leftrightarrow\Delta}(1/t)\sim \dfrac{1}{t\log^b t}.\]  Hence, for $b>0$, $E(w(\tau))<\infty$ always holds for $b>1$. 
        
    \end{enumerate}
    
\end{ex}

\bibliography{CTMCandEN}

@book{ewens2004mathematical,
  title={Mathematical population genetics: theoretical introduction},
  author={Ewens, Warren John and Ewens, WJ},
  volume={27},
  year={2004},
  publisher={Springer}
}

@book{bressloff2014stochastic,
  title={Stochastic processes in cell biology},
  author={Bressloff, Paul C},
  volume={41},
  year={2014},
  publisher={Springer}
}

@book{lyons2017probability,
  title={Probability on trees and networks},
  author={Lyons, Russell and Peres, Yuval},
  volume={42},
  year={2017},
  publisher={Cambridge University Press}
}

@book{spitzer2001principles,
  title={Principles of random walk},
  author={Spitzer, Frank},
  volume={34},
  year={2001},
  publisher={Springer Science \& Business Media}
}

@book{norris1998markov,
  title={Markov chains},
  author={Norris, James R},
  year={1998},
  publisher={Cambridge university press}
}

@book{Wang1992Birth,
  title={Birth and death processes and Markov chains},
  author={Z.K.Wang and X.Q.Yang},
  publisher={Springer-Verlag, Berlin; Science Press Beijing, Beijing},
  year={1992}
}

@book{anderson2012continuous,
  title={Continuous-time Markov chains: An applications-oriented approach},
  author={Anderson, William J},
  year={2012},
  publisher={Springer Science \& Business Media}
}

@book{nachmias2020planar,
  title={Planar Maps, Random Walks and Circle Packing: {\'E}cole D'{\'E}t{\'e} de Probabilit{\'e}s de Saint-Flour XLVIII-2018},
  author={Nachmias, Asaf},
  year={2020},
  publisher={Springer Nature}
}

@book{doyle1984random,
  title={Random walks and electric networks},
  author={Doyle, Peter G and Snell, J Laurie},
  volume={22},
  year={1984},
  publisher={American Mathematical Soc.}
}

@article{Zhao2006null,
  title={The First Return Speed of a Null Recurrent Markov Chain},
  author={Min Zhi Zhao},
  journal={Acta Mathematica Sinica, A Series},
  volume={27},
  number={6},
  pages={761--770},
  year={2006}
}

@mastersthesis{Zhou2020hitting,
  author  = "Jinjin Zhou",
  title   = "The Study of Hitting Times and Related
Properties in Some Reversible Markov Chains",
  school  = "Zhejiang University",
  year    = "2020"
}

@article{chung1967markov,
  title={Markov chains},
  author={Chung, Kai Lai},
  journal={Springer-Verlag, New York},
  year={1967},
  publisher={Springer}
}

@book{fukushima2011dirichlet,
  title={Dirichlet forms and symmetric Markov processes},
  author={Fukushima, Masatoshi and Oshima, Yoichi and Takeda, Masayoshi},
  volume={19},
  year={2011},
  publisher={Walter de Gruyter}
}

@article{pastor2015epidemic,
  title={Epidemic processes in complex networks},
  author={Pastor-Satorras, Romualdo and Castellano, Claudio and Van Mieghem, Piet and Vespignani, Alessandro},
  journal={Reviews of modern physics},
  volume={87},
  number={3},
  pages={925--979},
  year={2015},
  publisher={APS}
}

@book{kalashnikov2013mathematical,
  title={Mathematical methods in queuing theory},
  author={Kalashnikov, Vladimir V},
  volume={271},
  year={2013},
  publisher={Springer Science \& Business Media}
}

@book{chen2004markov,
  title={From Markov chains to non-equilibrium particle systems},
  author={Chen, Mufa},
  year={2004},
  publisher={World scientific}
}

@article{xu2023full,
  title={Full classification of dynamics for one-dimensional continuous-time markov chains with polynomial transition rates},
  author={Xu, Chuang and Hansen, Mads Christian and Wiuf, Carsten},
  journal={Advances in Applied Probability},
  volume={55},
  number={1},
  pages={321--355},
  year={2023},
  publisher={Cambridge University Press}
}

@inproceedings{backenkohler2020bounding,
  title={Bounding mean first passage times in population continuous-time Markov chains},
  author={Backenk{\"o}hler, Michael and Bortolussi, Luca and Wolf, Verena},
  booktitle={International Conference on Quantitative Evaluation of Systems},
  pages={155--174},
  year={2020},
  organization={Springer}
}

@inproceedings{narayanan1996first,
  title={First passage times in fluid models with an application to two priority fluid systems},
  author={Narayanan, A and Kulkarni, VG},
  booktitle={Proceedings of IEEE International Computer Performance and Dependability Symposium},
  pages={166--175},
  year={1996},
  organization={IEEE}
}

@article{hu2019moments,
  title={Moments and distributions of the last exit times for a class of Markov processes},
  author={Hu, Wei and Zhu, Quanxin and Lu, Yi},
  journal={Mathematics and Computers in Simulation},
  volume={155},
  pages={146--153},
  year={2019},
  publisher={Elsevier}
}

@article{gong2012hitting,
  title={Hitting time distributions for denumerable birth and death processes},
  author={Gong, Yu and Mao, Yong-Hua and Zhang, Chi},
  journal={Journal of Theoretical Probability},
  volume={25},
  pages={950--980},
  year={2012},
  publisher={Springer}
}

@article{reuter1957denumerable,
  title={Denumerable Markov processes and the associated contraction semigroups on l},
  author={Reuter, Gerd Edzard Harry},
  journal={Acta Math},
  volume={97},
  pages={1--46},
  year={1957},
}

@article{menshikov2014explosion,
  title={Explosion, implosion, and moments of passage times for continuous-time Markov chains: a semimartingale approach},
  author={Menshikov, Mikhail and Petritis, Dimitri},
  journal={Stochastic Processes and Their Applications},
  volume={124},
  number={7},
  pages={2388--2414},
  year={2014},
  publisher={Elsevier}
}

@article{kou2003first,
  title={First passage times of a jump diffusion process},
  author={Kou, Steven G and Wang, Hui},
  journal={Advances in applied probability},
  volume={35},
  number={2},
  pages={504--531},
  year={2003},
  publisher={Cambridge University Press}
}

@article{aurzada2011moments,
  title={Moments of recurrence times for Markov chains},
  author={Aurzada, Frank and D{\"o}ring, Hanna and Ortgiese, Marcel and Scheutzow, Michael},
  journal={ELECTRONIC  COMMUNICATIONS  in PROBABILITY},
  volume={16},
  pages={296--303},
  year={2011}
}

@inproceedings{horvath2024approximation,
  title={Approximation of First Passage Time Distributions of Compositions of Independent Markov Chains},
  author={Horv{\'a}th, Andr{\'a}s and Paolieri, Marco and Vicario, Enrico},
  booktitle={European Workshop on Performance Engineering},
  pages={75--90},
  year={2024},
  organization={Springer}
}

@article{al2015moment,
  title={The Moment Approximation of the First--Passage Time For The Birth--Death Diffusion Process with Immigraton to a Moving Linear Barrier},
  author={Al-Eideh, Basel M},
  journal={American Journal of Applied Mathematics and Statistics},
  volume={3},
  number={5},
  pages={184--189},
  year={2015}
}

@article{szewczak2008moments,
  title={On moments of recurrence times for positive recurrent renewal sequences},
  author={Szewczak, Zbigniew S},
  journal={Statistics \& probability letters},
  volume={78},
  number={17},
  pages={3086--3090},
  year={2008},
  publisher={Elsevier}
}

@article{athreya1968embedding,
  title={Embedding of urn schemes into continuous time Markov branching processes and related limit theorems},
  author={Athreya, Krishna B and Karlin, Samuel},
  journal={The Annals of Mathematical Statistics},
  volume={39},
  number={6},
  pages={1801--1817},
  year={1968},
  publisher={JSTOR}
}

@article{liu2015embedded,
  title={An Embedded Markov Chain Modeling Method for Movement-Based Location Update Scheme.},
  author={Liu, Peipei and Liu, Yu and Ge, Liangquan and Chen, Chuan},
  journal={J. Commun.},
  volume={10},
  number={7},
  pages={512--519},
  year={2015}
}

@article{bottcher2014embedded,
  title={Embedded Markov chain approximations in Skorokhod topologies},
  author={B{\"o}ttcher, Bj{\"o}rn},
  journal={arXiv preprint arXiv:1409.4656},
  year={2014}
}

@article{di2016multispecies,
  title={A multispecies birth--death--immigration process and its diffusion approximation},
  author={Di Crescenzo, Antonio and Martinucci, Barbara and Rhandi, Abdelaziz},
  journal={Journal of Mathematical analysis and Applications},
  volume={442},
  number={1},
  pages={291--316},
  year={2016},
  publisher={Elsevier}
}

@article{kapodistria2016linear,
  title={Linear birth/immigration-death process with binomial catastrophes},
  author={Kapodistria, Stella and Phung-Duc, Tuan and Resing, Jacques},
  journal={Probability in the Engineering and Informational Sciences},
  volume={30},
  number={1},
  pages={79--111},
  year={2016},
  publisher={Cambridge University Press}
}

@article{crawford2012transition,
  title={Transition probabilities for general birth--death processes with applications in ecology, genetics, and evolution},
  author={Crawford, Forrest W and Suchard, Marc A},
  journal={Journal of mathematical biology},
  volume={65},
  pages={553--580},
  year={2012},
  publisher={Springer}
}

@article{stockmayer1971local,
  title={Local-jump models for chain dynamics},
  author={Stockmayer, WH and Gobush, W and Norvich, R},
  journal={Pure and Applied Chemistry},
  volume={26},
  number={3-4},
  pages={537--544},
  year={1971},
  publisher={De Gruyter}
}

@article{di2012bilateral,
  title={On a bilateral birth-death process with alternating rates},
  author={Di Crescenzo, Antonio and Iuliano, Antonella and Martinucci, Barbara},
  journal={Ricerche di matematica},
  volume={61},
  pages={157--169},
  year={2012},
  publisher={Springer}
}

@article{karlin1958linear,
  title={Linear growth, birth and death processes},
  author={Karlin, Samuel and McGregor, James},
  journal={Journal of Mathematics and Mechanics},
  pages={643--662},
  year={1958},
  publisher={JSTOR}
}

@article{zheng2004transient,
  title={Transient analysis of linear birth--death processes with immigration and emigration},
  author={Zheng, Yuxi and Chao, Xiuli and Ji, Xiaomei},
  journal={Probability in the Engineering and Informational Sciences},
  volume={18},
  number={2},
  pages={141--159},
  year={2004},
  publisher={Cambridge University Press}
}

@article{li2024ray,
  title={Ray--Knight compactification of birth and death processes},
  author={Li, Liping},
  journal={Stochastic Processes and their Applications},
  volume={177},
  pages={104456},
  year={2024},
  publisher={Elsevier}
}

@misc{huynh2021secondraabestestseries,
      title={The Second Raabe's Test and Other Series Tests}, 
      author={Edward Huynh},
      year={2021},
      eprint={2109.05447},
      archivePrefix={arXiv},
      primaryClass={math.CA},
      url={https://arxiv.org/abs/2109.05447}, 
}
\bibliographystyle{unsrt}

\end{document}